\newcommand{\be}{\begin{equation}}
\newcommand{\ee}{\end{equation}}
\newcommand{\ben}{\begin{eqnarray*}}
\newcommand{\een}{\end{eqnarray*}}
\newtheorem{examp}{\sc Example}
\newtheorem{remk}{\sc Remark}
\newtheorem{corol}{\sc Corollary}
\newtheorem{lemma}{\sc Lemma}
\newtheorem{theorem}{\sc Theorem}
\newtheorem{defn}{\sc Definition}
\newcommand{\bt}{\begin{theorem}}
\newcommand{\et}{\end{theorem}}
\newcommand{\bl}{\begin{lemma}}
\newcommand{\el}{\end{lemma}}
\newcommand{\bed}{\begin{defn}}
\newcommand{\eed}{\end{defn}}
\newcommand{\brem}{\begin{remk}}
\newcommand{\erem}{\end{remk}}
\newcommand{\bex}{\begin{examp}}
\newcommand{\eex}{\end{examp}}
\newcommand{\bcl}{\begin{corol}}
\newcommand{\ecl}{\end{corol}}
\newcommand{\NI}{\noindent}
\newcommand{\al}{\alpha}
\newcommand{\fal}{\forall}
\newcommand{\riro}{\Rightarrow}
\newcommand{\sbs}{\subset}
\newcommand{\seq}{\subseteq}
\newcommand{\dsp}{\displaystyle}
\newcommand{\vsp}{\vskip 1em}
\newcommand\Bigger[2][7]{\left#2\rule{0mm}{#1truemm}\right.}
\theoremstyle{definition}
\theoremstyle{remark}
\numberwithin{equation}{section}
\begin{document}
\title{On  Semimonotone  Star Matrices and Linear Complementarity Problem}
\author{R. Jana$^{a,1}$, A. K. Das$^{a, 2}$, S. Sinha$^{b, 3}$\\
	\emph{\small $^{a}$Indian Statistical Institute, 203 B. T.
		Road, Kolkata, 700 108, India.}\\
	\emph{\small $^{b}$Jadavpur University, Kolkata , 700 032, India.}\\	
	\emph{\small $^{1}$Email: rwitamjanaju@gmail.com}\\
	\emph{\small $^{2}$Email: akdas@isical.ac.in}\\
	\emph{\small $^{3}$Email: sagnik62@gmail.com }}
\date{}
\maketitle

\begin{abstract}
	
 \noindent In this article, we introduce the class of semimonotone star ($E_0^s$) matrices. We establish the importance of the class of $E_0^s$-matrices in the context of complementarity theory. We show that the principal pivot transform of $E_0^s$-matrix is not necessarily $E_0^s$ in general. However, we prove that $\tilde{E_0^s}$-matrices, a subclass of the $E_0^s$-matrices with some additional conditions, is in $E_0^f$ by showing this class is in $P_0.$ We prove that LCP$(q, A)$ can be processable by Lemke's algorithm if $A\in \tilde{E_0^s}\cap P_0.$ We find some conditions for which the solution set of LCP$(q, A)$ is bounded and stable under the $\tilde{E^s_0}$-property. We propose an algorithm based on an interior point method to solve LCP$(q, A)$ given $A \in \tilde{E^{s}_{0}}.$\\

\NI{\bf Keywords:} Linear complementarity problem, principal pivot transform, Lemke's algorithm, interior point method, semimonotone star matrix, $\tilde{E^s_0}$-matrix.\\ 

\NI{\bf AMS subject classifications:} 90C33, 90C90.
\end{abstract}
\footnotetext[1]{Corresponding author}


\section{Introduction}
\noindent The concept of pseudomonotone or copositive star matrices on a closed convex cone with respect to complementarity condition was studied by Gowda \cite{gowdastar}. The properties of copositive star matrices are well studied in the literature of linear complementarity problem. A star matrix \cite{Bazan} is defined as any point $x$ from solution set of LCP$(q, A)$ satisfies $A^T x\leq 0.$  Bazan and Lopez \cite{Bazan} studied $F_1$-matrix in the context of star matrices and proved the necessary and sufficient conditions of $F_1$-properties.
In linear complementarity theory, much of the research is devoted to find out constructive characterization of $Q_0$ and $Q$-matrices. The set $K(A)$ denotes the closed cone containing the nonnegative orthant $R^n_+.$ Eaves \cite{eaves} showed that $A\in Q_0$ if and only if $K(A)$ is convex. A subclass $Q$ of $Q_0$ is defined by the property that $A\in Q$ if and only if $K(A)=R^n.$ Aganagic and Cottle \cite{aganagic} showed that Lemke's algorithm processes LCP($q,A$) if $A\in P_0\cap Q_0.$

Many of the concepts and algorithms in optimization theory are developed based on principal pivot transform (PPT). The notion of PPT is originally motivated by the well known linear complementarity problem. The class of semimonotone matrices ($E_0$) introduced by Eaves \cite{eaves} (denoted by $L_1$ also) consists of all real square matrices $A$ such that LCP$(q, A)$ has a unique solution for every $q>0$. Cottle and Stone \cite{cottle1983uniqueness} introduced the notion of a fully semimonotone matrix $(E_{0}^{f})$ by requiring that  every PPT of such a matrix is a semimonotone matrix. Stone studied various properties of $E_{0}^{f}$-matrices and conjectured that $E_{0}^{f}$ with $Q_{0}$-property are contained in $P_{0}.$
The linear complementarity problem is a combination of linear and nonlinear system of inequalities and equations. The problem may be stated as follows:
Given $A\in R^{n\times n}$ and  a vector  $\,q\,\in\,R^{n},\,$ the {\it linear complementarity problem} LCP$(q,A)$ is the problem of finding a solution $w\;\in R^{n}\;$ and  $z\;\in R^{n}\;$ to the following system:
\be
\dsp {w\,-\,A z\;=\;q,\;\;w\geq 0,\;z\geq 0}
\ee
\be
\dsp {w^{T}\,z\,=\;0}
\ee

Let FEA$(q, A)$ $= \{z : q + Az \geq 0\}$ and SOL$(q, A)$ $= \{z \in$ {\normalfont FEA$(q, A) : z^T(q + Az) = 0\}$ denote the feasible and solution set of LCP$(q, A)$ respectively. In this article, we introduce a class of matrices called semimonotone star matrix $(E_0^s)$ by introducing the notion of star property. 
\vsp

The outline of the article is as follows. In Section 2, some notations, definitions and results are presented  that are used in the next sections. In section 3, we introduce semimonotone star $(E_0^s)$-matrix and study some properties of this class in connection with complementarity theory, principal pivot transform. Section 4 deals with PPT based matrix classes under $E_0^s$-property. In section 5, we consider the SOL$(q, A)$ under $E_0^s$-property. In this connection we partially settle an open problem raised by Gowda and Jones \cite{jones-gowda}. We propose an iterative algorithm \cite{das2016generalized} to process LCP$(q, A)$ where $A \in \tilde{E^{s}_{0}},$ a subclass of $E^s_0$-matrix in section 6. A numerical example is presented to demonstrate the performance of the proposed algorithm in section 7.

\section{Preliminaries}
\noindent We denote the $n$ dimensional real space by $R^n$ where $R^{n}_{+}$ and $R^{n}_{++}$ denote the nonnegative and positive orthant of $R^{n}$ respectively. We consider vectors and matrices with real entries. For any set $\,\beta \seq \{1,2,\ldots,n\},\,$ $\bar{\beta}$ denotes its complement in $\{1,2,\ldots,n\}.$ Any vector $x\in R^{n}$ is a column vector unless otherwise specified. For any matrix $A\,\in\,R^{n\times n},$ $a_{ij}$ denotes its $i$th row  and $j$th column entry, $A_{\cdot j}$ denotes the $j$th column and $A_{i\cdot}$ denotes the $i$th row of $A$. If $A$ is a matrix of order $n$,  $\emptyset\neq\al \seq \{1,2,\ldots,n\}$ and $\emptyset\neq\beta\seq \{1,2,\ldots,n\},$ then  $A_{\al\beta}$ denotes the submatrix of $A$ consisting of only the rows and columns of $A$ whose indices are in $\al$ and $\beta,$ respectively. For any set $\al$, $\;|\al|$ denotes its cardinality. $\|A\|$ and $\|q\|$ denote the norms of a matrix $A$ and a vector $q$ respectively. 

The {\it principal pivot transform} (PPT) of $A,$ a real $n \times n$ matrix, with respect to $\al\subseteq
\{1,2,\ldots,n\}$ is defined as the matrix given by
$$ \dsp {
	M = \left[ \begin{array}{cc}
	M_{\al\al} & M_{\al\bar{\al}}\\
	M_{\bar{\al}\al} & M_{\bar{\al}\bar{\al}}
	\end{array} \right]
} $$
\NI{where}  $M_{\al\al} =(A_{\al\al})^{-1},\;
M_{\al\bar{\al}}$=$-(A_{\al\al})^{-1}
A_{\al\bar{\al}},\;\,$$M_{\bar{\al}\al}=
A_{\bar{\al}\al}(A_{\al\al})^{-1},$ $M_{\bar{\al}\bar{\al}}=
A_{\bar{\al}\bar{\al}}-A_{\bar{\al}\al}(A_{\al\al})^{-1}
A_{\al\bar{\al}}.$ Note that PPT is only defined with respect to those $\al$ for which $\det A_{\al\al} \neq 0.$ By a {\it legitimate principal pivot transform} we mean the PPT obtained from $A$ by performing a principal pivot on its nonsingular principal submatrices. When $\al=\emptyset$, by convention $\det A_{\al\al}=1$ and $M=A.$  For further details see \cite{cottleppt}, \cite{cps}, \cite{neogy2005principal} and \cite{neogy2012generalized} in this connection. The PPT of LCP$(q,A)$ with respect to $\al$ (obtained by
pivoting on $A_{\al\al}$) is given by
LCP$(q^{'},M)$ where $M$ has the same structure already mentioned with $q^{'}_{\al}= -A_{\al\al}^{-1}q_{\al}$ and
$q^{'}_{\bar{\al}} =q_{\bar{\al}}-A_{\bar{\al}\al}A_{\al\al}^{-1}q_{\al}.$

\vsp
We say that $A\in R^{n\times n}$ is \\

	\NI $-$ {\it positive definite} (PD) matrix if $x^{T}Ax> 0,\;\fal\;0\neq x\in R^{n}.$ \\ 
	\NI $-$ {\it positive semidefinite} (PSD) matrix if $x^{T}Ax\geq 0,\;\fal\;x\in R^{n}.$ \\
	\NI $-$ {\it column sufficient} matrix if $x_i(Ax)_i \leq 0 \ \forall i$ $\implies \ x_i(Ax)_i = 0 \ \forall i.$\\
	\NI $-$ {\it row sufficient} matrix if $A^T$ is column sufficient.\\
	\NI $-$ {\it sufficient} matrix if $A$ is both column and row sufficient.\\ 
	\NI $-$ {\it $P\,(P_{0})$}-matrix if all its principal minors are  positive (nonnegative).\\
	\NI $-$ {\it $N(N_{0})$}-matrix  if  all its principal minors are  negative (nonpositive).\\
	\NI $-$ {\it copositive} $(C_{0})$ matrix if $x^{T}Ax\geq 0,\;\fal\;x\geq 0.$ \\
	\NI $-$ {\it strictly copositive} $(C)$ matrix if $x^{T}Ax> 0,\;\fal\;0\neq\;x\geq 0.$ \\
	\NI $-$ {\it copositive plus} $(C_{0}^{+})$ matrix if $A$ is copositive and $x^TAx=0,\; x\geq 0 \implies (A + A^T)x = 0.$\\
	\NI $-$ {\it copositive star} $(C_{0}^{*})$ matrix if $A$ is copositive and $x^TAx=0,\;Ax\geq 0,\;x\geq 0 \implies A^Tx\leq 0.$\\
	\NI $-$ {\it semimonotone}  ($E_{0}$) matrix if for every $0\neq x\geq 0,$  $\exists$  an $i$ such that $x_{i}\,>\,0$ and $(Ax)_{i}\geq 0.$ \\
	\NI $-$ $L_2$-matrix if for every $0 \neq x \geq 0, \ x \in R^n,$ such that $Ax \geq 0, \ x^TAx = 0,$ $\exists$ two diagonal matrices $D_1 \geq 0$ and $D_2 \geq 0$ such that $D_2x \neq 0$ and $(D_1A + A^TD_2)x = 0.$\\
	\NI $-$ $L$-matrix if it is $E_0 \cap L_2.$\\ 
	\NI $-$ {\it strictly semimonotone}  ($E$) matrix if for every $0\neq x \geq 0,$  $\exists$  an $i$ such that $x_{i}\,>\,0$ and $(Ax)_{i}> 0.$ \\
	\NI $-$ {\it pseudomonotone} matrix if for all $x, y \geq 0,$ $(y-x)^{T}Ax \geq 0 \implies (y-x)^{T}Ay \geq 0.$ \\
	\NI $-$ {\it positive subdefinite matrix} (PSBD) if $\forall x \in R^n,$ $x^TAx < 0 \implies$ either $A^Tx \leq 0$ or $A^Tx \geq 0.$\\
	\NI $-$ {\it fully copositive} $(C_{0}^{f})$ matrix if every legitimate PPT  of $A$ is $C_{0}$-matrix.\\
	\NI $-$ {\it fully semimonotone} $(E_{0}^{f})$ matrix if every legitimate PPT  of $A$ is $E_{0}$-matrix. \\
	\NI $-$ {\it almost $P_{0}(P)$}-matrix if  $\det A_{\al\al}\geq 0$ $(>0)$ 
	$\fal\;\al\sbs\{1,2,\ldots,n\}$ and $\det A<0.$ \\
	\NI $-$ {\it an almost $N_{0}(N)$}-matrix if  $\det A_{\al\al}\leq 0$ $(<0)$ $\fal\;\al\sbs\{1,2,\ldots,n\}$ and $\det A>0.$ \\
	\NI $-$ {\it almost copositive} matrix if it is copositive of order $n-1$ but not of order $n.$ \\
	\NI $-$ {\it almost E} matrix if it is E of order $n-1$ but not of order $n.$\\
	\NI $-$ {\it almost fully copositive} ({\it almost} $C_{0}^{f}$) matrix if its PPTs are either $C_{0}$ or almost $C_{0}$ and  there exists atleast one PPT $M$ of  $A$ for some $\al\sbs\{1,2,\ldots,n\}$ that is almost $C_{0}.$ \\
	\NI $-$ {\it copositive of exact order} $k$ matrix if it is copositive  up to order
	$n-k.$\\
	\NI $-$ $Z$-matrix if $a_{ij} \leq 0.$ \\
	\NI $-$ $K_0$-matrix \cite{chu} if it is $Z$-matrix as well as $P_0$-matrix. \\
	\NI $-$ {\it connected} $(E_c)$ matrix if $\forall q,$ LCP$(q,A)$ has a connected solution set.\\
	\NI $-$ $R$-matrix if $\nexists$ $z \in R^n_{+}, \ t (\geq 0) \in R$ satisfying 
	\begin{center}
		$A_{i.}z + t = 0$ if $i$ such that $z_i > 0,$\\
		$A_{i.}z + t \geq 0$ if $i$ such that $z_i = 0.$
	\end{center}
	\NI $-$ $R_0$-matrix if LCP$(0, A)$ has unique solution.\\
	\NI $-$ $Q_b$-matrix if SOL$(q, A)$ is nonempty and compact $\forall q \in R^n.$\\
	\NI $-$ {\it $Q$}-matrix if for every $q\in R^{n},$ LCP$(q,A)$ has a solution. \\
	\NI $-$ {\it $Q_{0}$}-matrix if for any $q\in R^{n},$ (1.1) has a solution implies that LCP$(q, A)$ has a solution. \\
	\NI $-$ {\it completely $Q$}-matrix $(\bar{Q})$ if  all its principal submatrices are $Q$-matrices. \\
	\NI $-$ {\it completely $Q_{0}$}-matrix $(\bar{Q_{0}})$ if  all its principal submatrices are $Q_{0}$-matrices. \\
	
We state some game theoretic results due to von Neumann \cite{von} which are needed in the sequel. In a two person zero-sum matrix game, let $v(A)$ denote the value of the game corresponding to the pay-off matrix $A.$
The value of the game $v(A)$ is {\it positive} ({\it nonnegative}) if  there exists a $0\neq x\geq 0$ such that $Ax>0\;(Ax\geq 0).$ Similarly, $v(A)$ is {\it negative} ({\it nonpositive}) if  there exists a
$0\neq y\geq 0$ such that $A^{T}y<0\;(A^{T}y\leq 0).$
\vsp
The following result was proved by V$\ddot{\mbox{a}}$liaho \cite{valiaho} for symmetric almost copositive  matrices. However this is true for nonsymmetric almost copositive matrices as well.
\bt \cite{akd} \label{das}
Let $A\in R^{n\times n}$ be  almost copositive. Then
$A$ is PSD of order $n-1,$ and $A$ is PD of order $n-2.$
\et
\bt \cite{mohan2001more} \label{psbd}
Suppose $A \in R^{n \times n}$ is a PSBD matrix and rank$(A) \geq 2.$ Then $A^T$ is PSBD and at least one of the following conditions hold:\\
(i) $A$ is a PSD matrix.\\
(ii) $(A + A^T) \leq 0.$\\
(iii) $A \in C_0^*.$
\et 
\bt \cite{mohan2001more} \label{SS}
Suppose $A \in R^{n \times n}$ is a PSBD matrix and rank$(A) \geq 2.$ and $A + A^T \leq 0.$ If A is not a skew-symmetric matrix, then $A \leq 0.$
\et
Here we consider some more results which will be required in the next section.
\bt \cite{chu} \label{chu}
Suppose $A \in R^{n \times n}$ with $A$ satisfies $(++)$-property. If $A \in E_0$ then $A \in P_0.$
\et
\bt \cite{gowda-pang} \label{pangresult}
Let $A \in R^{n \times n}$ be given. Consider the statements\\
(i) $A \in R.$\\
(ii) $A \in$ int$(Q) \cap R_0.$\\
(iii) the zero vector is a stable solution of the LCP$(0, A).$\\
(iv) $A \in Q \cap R_0.$\\
(v) $A \in R_0.$\\
Then the following implications hold:
(i) $\implies$ (ii) $\implies$ (iii) $\implies$ (iv) $\implies$ (v).
Moreover, if $A \in E_0,$ then all five statements are equivalent.
\et
\bt \cite{gowda-pang} \label{pangtheorem}
Let $A \in$ int$(Q) \cap R_0.$ If the LCP$(q, A)$ has a unique solution $x^{*},$ then LCP$(q, A)$ is stable at $x^{*}.$
\et

\bt \cite{tpsriparna} \label{tp}
Let $A \in R^{n \times n}$ be such that for some index set $\al$ (possibly empty), $A_{\bar{\al}} = 0.$ If $A_{\al \al} \in P_{0} \cap Q,$ then SOL$(q, A)$ is connected for every q.
\et

\bt \cite{cao-ferris} \label{cao}
Suppose $A \in E_c \cap Q_0.$ Then  Lemke's algorithm terminates at a solution of LCP$(q, A)$ or determines that FEA$(q, A) = \emptyset.$
\et
\begin{theorem} \cite{gowdastar} \label{gowdatheorem}
Suppose that $A$ is pseudomonotone on $R^{n}_{+}.$ Then $A$ is a $P_0$ matrix.
\end{theorem}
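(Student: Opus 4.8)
The plan is to derive two consequences of pseudomonotonicity and then play them against a vector that witnesses a failure of the $P_0$ property. I will use the well-known characterization (see, e.g., \cite{cps}) that $A\in P_0$ if and only if for every $z\neq 0$ there is an index $i$ with $z_i\neq 0$ and $z_i(Az)_i\geq 0$; accordingly, suppose for contradiction that there is a $z\neq 0$ with $z_i(Az)_i<0$ for every $i$ in the support of $z$.

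The first, cheap, consequence is that $A$ is copositive: putting $x=0$ and letting $y\geq 0$ be arbitrary, the hypothesis $(y-x)^TAx=0\geq 0$ holds trivially, so pseudomonotonicity forces $(y-0)^TAy=y^TAy\geq 0$. Next write $z=u-w$ with $u=z^{+}\geq 0$ and $w=z^{-}\geq 0$ (disjoint supports). A short computation gives $u^TAz=\sum_{z_i>0}z_i(Az)_i\leq 0$ and $w^TAz=\sum_{z_i<0}(-z_i)(Az)_i\geq 0$, strictly so when $u\neq 0$, resp.\ $w\neq 0$, since $z_i(Az)_i<0$ on the support. If $z\geq 0$ or $z\leq 0$ we are already done: then $z^TAz=\sum_i z_i(Az)_i<0$ (all support terms negative), contradicting $z^TAz=(\pm z)^TA(\pm z)\geq 0$ from copositivity. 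So we may assume $u\neq 0$ and $w\neq 0$. Expanding $u^TAz<0$ and $w^TAz>0$ via $z=u-w$ gives $u^TAu<u^TAw$ and $w^TAw<w^TAu$; combined with $u^TAu\geq 0$ and $w^TAw\geq 0$ from copositivity, all four numbers $u^TAu,\,u^TAw,\,w^TAu,\,w^TAw$ are strictly positive.

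The crux is one more application of pseudomonotonicity along a single ray. For $\lambda>0$ set $x=\lambda u\geq 0$ and $y=w\geq 0$; then $(y-x)^TAx=\lambda\,(w^TAu-\lambda\,u^TAu)$ and $(y-x)^TAy=w^TAw-\lambda\,u^TAw$, so pseudomonotonicity reads $\lambda\,u^TAu\leq w^TAu\ \Longrightarrow\ \lambda\,u^TAw\leq w^TAw$ for every $\lambda>0$. Taking $\lambda=w^TAu/u^TAu>0$, which makes the hypothesis an equality, and clearing the positive denominator yields $(u^TAw)(w^TAu)\leq(u^TAu)(w^TAw)$. But multiplying the two strict inequalities $u^TAu<u^TAw$ and $w^TAw<w^TAu$ (all factors positive) gives $(u^TAu)(w^TAw)<(u^TAw)(w^TAu)$, the opposite strict inequality. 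This contradiction shows no such $z$ exists, i.e.\ $A\in P_0$.

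I expect the genuine content to be the choice of the ray $x=\lambda u,\ y=w$: it is exactly what converts the pseudomonotonicity implication into the cross-product inequality $(u^TAw)(w^TAu)\leq(u^TAu)(w^TAw)$, which is the reverse of what the failure of $P_0$ forces after one multiplies the elementary inequalities $u^TAu<u^TAw$ and $w^TAw<w^TAu$. The remaining points are routine bookkeeping: disposing of the one-signed cases for $z$ by copositivity before any division, and checking that the four scalar products are strictly positive so that the final multiplication of inequalities is legitimate.
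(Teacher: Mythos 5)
The paper does not actually prove this statement---it is quoted from Gowda \cite{gowdastar} as a known result---so your argument can only be judged on its own terms, not against an internal proof. Your overall strategy is sound and self-contained: the characterization of $P_0$ by a vector $z\neq 0$ with $z_i(Az)_i<0$ on its support, copositivity obtained from the choice $x=0$, the sign information $u^TAu<u^TAw$ and $w^TAw<w^TAu$ for $u=z^{+}$, $w=z^{-}$, and the ray $x=\lambda u$, $y=w$ that converts pseudomonotonicity into the cross-product inequality $(u^TAw)(w^TAu)\le(u^TAu)(w^TAw)$. Each of these computations checks out against the paper's definition of pseudomonotonicity.

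The one genuine flaw is the assertion that ``all four numbers $u^TAu$, $u^TAw$, $w^TAu$, $w^TAw$ are strictly positive.'' Copositivity only yields $u^TAu\ge 0$ and $w^TAw\ge 0$; your inequalities then give $u^TAw>0$ and $w^TAu>0$, but $u^TAu$ (or $w^TAw$) may well be zero, and nothing you have derived excludes this. The gap matters because your choice $\lambda=w^TAu/u^TAu$ divides by $u^TAu$. It is easily repaired with the same ray: if $u^TAu=0$, then for every $\lambda>0$ the hypothesis $\lambda\,u^TAu\le w^TAu$ holds automatically (since $w^TAu>0$), so pseudomonotonicity forces $\lambda\,u^TAw\le w^TAw$ for all $\lambda>0$, which is impossible because $u^TAw>0$; letting $\lambda\to\infty$ gives the contradiction directly. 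Note also that your final multiplication step never needed all four factors positive: from $0\le u^TAu<u^TAw$ and $0\le w^TAw<w^TAu$ one gets $(u^TAu)(w^TAw)\le(u^TAu)(w^TAu)<(u^TAw)(w^TAu)$ using only $w^TAu>0$. With the zero-denominator case added, your proof is complete.
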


\bt \cite{jones-gowda} \label{fullysemi}
Suppose that $A \in R^{n \times n} \cap E_c.$ Then $A \in E^{f}_{0}.$
\et


\bt \cite{valiaho} \label{valiaho_res}
Any $2 \times 2$ $P_0$-matrix with positive diagonal is sufficient.
\et

\bt \cite{eaves} \label{L}
$L$-matrices are $Q_0$-matrices.
\et

\bt \cite{cottle-guu} \label{guu}
Let $A \in R^{n \times n}$ where $n \geq 2.$ Then $A$ is sufficient if and only if $A$ and each of its principal pivot transforms are sufficient of order $2.$
\et

\bt \cite{Pang} \label{Qmatrix} \cite{neogy2005almost}
Suppose $A \in E_0 \cap R^{n \times n}.$ If $A \in R_0$ then $A \in Q.$
\et

\bt \cite{Bazan} \label{lopez}
$Q_b = Q \cap R_0.$
\et

\section{Some properties of $E_0^s$-matrices}
We begin by the definition of semimonotone star $(E_0^s)$ matrix.
\bed
A semimonotone matrix $A$ is said to be a semimonotone star ($E_0^s$) matrix if $x^TAx=0,\;Ax\geq 0,\;x\geq 0 \implies A^Tx\leq 0.$
\eed
\vsp
\begin{examp}
Consider the matrix
	$ \dsp {
		A = \left[ \begin{array}{rr}
		0 & -5\\
		2 & 0
		\end{array} \right]
	} .$
	Now $x^TAx = -3x_1x_2.$ Consider $\dsp{
		x = \left[ \begin{array}{r}
		k_1\\
		k_2
		\end{array} \right]
	},$ where $k_1, \ k_2 \geq 0.$ Hence we consider the following cases.\\
		
		\NI Case I: For $k_1 = k_2 = 0,$ $x = 0, \ Ax = 0, \ x^TAx = 0$ implies $A^Tx = 0.$\\
		\NI Case II: For $k_1 > 0 , \ k_2 = 0,$ $x \geq 0, \ Ax \geq 0, \ x^TAx = 0$ implies $A^Tx \leq 0.$ \\
		\NI Case III: For $k_1 = 0, \ k_2 > 0,$ $x \geq 0.$ However $Ax \ngeq 0.$\\
\NI Hence $A \in E_{0}^{s}.$
\end{examp}

The following result shows that $E_0^s$-matrices are invariant under principal rearrangement and scaling operations.

\bt
If $A\in R^{n\times n}\cap E_0^s$-matrix and $P\in R^{n\times n}$ is any permutation matrix, then $PAP^{T}\in$  $E_0^s.$
\et
\begin{proof}
Let $A\in E_0^s$ and let $P\in R^{n\times n}$ be
any permutation matrix. Then $PAP^T$ is an $E_0$-matrix by the Theorem 4.3 of \cite{tsatsomeros2019semimonotone}. Now for any $x\in R^{n}_{+},$ let $y=P^{T}x.$
Note that $x^{T}PAP^{T}x=y^{T}Ay =0,$ $AP^Tx=Ay\geq 0 $ $\riro$
$A^{T}y=A^{T}P^{T}x\leq 0,$
since $P$ is a permutation matrix. It follows that
$PAP^{T}$ is a $E_0^s$-matrix. The converse  of the above theorem follows from the fact that $P^{T}P=I$
and $A=P^{T}(PAP^{T})(P^{T})^{T}.$
\end{proof}
\begin{examp}
	Let
	$ \dsp{
		A = \left[ \begin{array}{rrr}
		0 & 1 & 1\\
		2 & 0 & 2\\
		-4 & -5 & 0
		\end{array} \right].
	}$ Clearly, $A\in $ $E_0.$ The nonzero vectors in SOL$(0, A)$ are of the form $ \dsp{
	x = \left[ \begin{array}{rrr}
	0 \\
	0 \\
	k
	\end{array} \right]
}$ for $k > 0,$ and for such $x$ the inequality $A^{T}x \leq
0$ holds. Therefore $A \in E_0^s.$ Consider $ \dsp{
	P = \left[ \begin{array}{rrr}
	0 & 0 & 1\\
	1 & 0 & 0\\
	0 & 1 & 0
	\end{array} \right].
}$ We get\\ $ \dsp{
PAP^T = \left[ \begin{array}{rrr}
0 & -4 & -5\\
1 & 0 & 1\\
2 & 2 & 0
\end{array} \right].
}$ Hence $x^{T}PAP^Tx=0,$  $ \dsp {
PAP^Tx\geq 0,\; x\geq 0 } $ imply $PA^{T}P^Tx \leq
0.$ Therefore $PAP^T$ is a $E^{s}_{0}$-matrix.
\end{examp}
\bt
Suppose $A\in R^{n\times n}$ is a $E_0^s$-matrix. Let $D\in R^{n\times n}$ be a positive diagonal matrix. Then  $A\in$  $E_0^s$ if and only if $DAD^{T}\in$ $E_0^s.$
\et
\begin{proof}Let $A\in E_0^s.$  For any $x\in R^{n}_+,$ let $y=D^{T}x.$
Note that $x^{T}DAD^{T}x=y^{T}Ay=0,$ $AD^Tx=Ay\geq 0$ $\riro$
$A^{T}y=A^{T}D^{T}x\leq 0$
since $D$ is  a positive diagonal matrix. Thus $DAD^{T}\in E_0^s.$ The converse follows from the fact that $D^{-1}$ is a positive
diagonal matrix and $A=D^{-1}(DAD^{T})(D^{-1})^{T}.$
\end{proof}

The following example shows that  $A\in E_0^s$-matrix does not imply $(A+A^{T})\in E_0^s$-matrix.

\begin{examp}
Let
	$ \dsp {
		A = \left[ \begin{array}{rrr}
		0 & 1 & 1\\
		2 & 0 & 1\\
		-1 & -1 & 0
		\end{array} \right].
	} $ Clearly $A\in $ $E_0^s,$ since $x^{T}Ax=0,$  $ \dsp {
		Ax\geq 0,\; x\geq 0 } $ imply  $A^{T}x\leq
	0.$
	
	\NI It is easy to show that $ \dsp {
		A+A^{T} = \left[ \begin{array}{rrr}
		0 & 3 & 0\\
		3 & 0 & 0\\
		0 & 0 & 0
		\end{array} \right]  } $ is not a $E_0^s$-matrix.
\end{examp}

We show that PPT of $E_0^s$-matrix need not be $E_0^s$-matrix.
\begin{examp}
Consider the matrix
	$ \dsp {
		A = \left[ \begin{array}{rrr}
		0 & 1 & 1\\
		2 & 0 & 1\\
		-1 & -1 & 0
		\end{array} \right]
	}.$ Note that $A\in E_0^{s}$
	and it is easy to show that $ \dsp {
		A^{-1} = \frac{1}{3} \left[ \begin{array}{rrr}
		-1 & 1 & -1\\
		1 & -1 & -2\\
		2 & 1 & 2
		\end{array} \right]  } $ is not a $E_0^s$-matrix. Therefore any PPT of  $E_0^s$-matrix need  not be  $E_0^s$-matrix.
\end{examp}

Note that a matrix is in $E_0$ if and only if its transpose is in $E_0.$ We show that $A\in E_0^s$-matrix does not imply $A^{T}\in E_0^s$-matrix in general.

\begin{examp} 
	Consider the matrix
	$ \dsp {
		A = \left[ \begin{array}{rrr}
		0 & 1 & 1\\
		2 & 0 & 1\\
		-1 & -1 & 0
		\end{array} \right]
	} .$ Note that $A\in E_0^s.$
	However $ \dsp {
		A^{T} = \left[ \begin{array}{rrr}
		0 & 2 & -1\\
		1 & 0 & -1\\
		1 & 1 & 0
		\end{array} \right]} $ is not a $E_0^s$-matrix.
\end{examp}

Now we show a condition under which $A^T$ satisfies $E^s_0$-property.

\begin{theorem}
	Suppose that $A$ is pseudomonotone on $R^{n}_{+}$ and $A^T \in R_0.$ Then $A^T$ satisfies $E^s_0$-property.
\end{theorem}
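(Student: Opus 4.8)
My plan is to split the conclusion $A^{T}\in E_{0}^{s}$ into two essentially independent tasks: showing that $A^{T}$ is semimonotone, and showing that the premise of the star implication for $A^{T}$ can only be met by the zero vector. The pseudomonotonicity hypothesis feeds the first task, and the $R_{0}$ hypothesis carries the second.

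First I would invoke Theorem~\ref{gowdatheorem}: since $A$ is pseudomonotone on $R^{n}_{+}$, it is a $P_{0}$-matrix. The principal minors of $A^{T}$ coincide with those of $A$, so $A^{T}\in P_{0}$ as well. Since every $P_{0}$-matrix is semimonotone (if $0\neq x\geq 0$ and $(Bx)_{i}<0$ for every index $i$ in the support $\al$ of $x$, then $B_{\al\al}$ reverses the sign of the componentwise-positive vector $x_{\al}$, contradicting $B_{\al\al}\in P_{0}$), we conclude $A^{T}\in E_{0}$.

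Next I would check the star condition. Suppose $x\geq 0$ satisfies $x^{T}A^{T}x=0$ and $A^{T}x\geq 0$; the goal is $(A^{T})^{T}x=Ax\leq 0$. Put $w:=A^{T}x\geq 0$. Then $w-A^{T}x=0$, $w\geq 0$, $x\geq 0$, and $w^{T}x=x^{T}A^{T}x=0$ (this last quantity is a scalar, equal to $x^{T}Ax$), so $x$ solves LCP$(0,A^{T})$. Because $A^{T}\in R_{0}$, the only solution of LCP$(0,A^{T})$ is the origin, hence $x=0$ and $Ax=0\leq 0$. Together with $A^{T}\in E_{0}$ from the previous step, this yields $A^{T}\in E_{0}^{s}$.

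I do not expect a genuine obstacle; the argument is largely bookkeeping built on Theorem~\ref{gowdatheorem}. The two points that warrant care are the transposition-invariance of $P_{0}$ together with the inclusion $P_{0}\seq E_{0}$, and the observation that the antecedent $\{x\geq 0,\ A^{T}x\geq 0,\ x^{T}A^{T}x=0\}$ of the star implication is precisely the assertion that $x$ solves LCP$(0,A^{T})$, which the $R_{0}$ assumption collapses to $\{0\}$. It is perhaps worth recording that $R_{0}$ by itself is not enough (e.g.\ $-I\in R_{0}$ but $-I\notin E_{0}$), so the pseudomonotonicity hypothesis is genuinely needed, though only for the semimonotonicity half of the conclusion.
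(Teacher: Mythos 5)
Your proof is correct, but it is genuinely different from the paper's. The paper only extracts from $A^{T}\in R_{0}$ the weak consequence that $A^{T}x=0$ has no solution with $0\neq x\geq 0$, so that some $(A^{T}x)_{i}>0$, and then runs Gowda's pseudomonotonicity machinery: it tests the pseudomonotone implication against perturbations $x-\delta y$ with $y=e_{i}+\lambda e_{j}$ to deduce $(Ax)_{i}+\lambda(Ax)_{j}\leq 0$ for all $j$ and hence $Ax\leq 0$. You instead use the full strength of $R_{0}$: the antecedent of the star implication for $A^{T}$ (namely $x\geq 0$, $A^{T}x\geq 0$, $x^{T}A^{T}x=0$) is exactly the statement that $x$ solves LCP$(0,A^{T})$, so $R_{0}$ collapses it to $x=0$ and the star condition holds trivially; pseudomonotonicity is then needed only through Theorem \ref{gowdatheorem} to give $A\in P_{0}$, hence $A^{T}\in P_{0}\subseteq E_{0}$. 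Your route is shorter and exposes that, under the stated hypothesis, the star half of the conclusion is vacuous; the paper's route is doing more work than the stated theorem requires, but in exchange it proves something stronger, since the perturbation argument establishes $Ax\leq 0$ from pseudomonotonicity alone whenever $A^{T}x\geq 0$ with $A^{T}x\neq 0$, i.e.\ it would survive replacing $A^{T}\in R_{0}$ by the weaker assumption that $A^{T}$ has no nonzero nonnegative null vector, under which the implication is no longer vacuous. Both approaches rely on the same first step ($P_{0}$ via Gowda, transpose invariance, $P_{0}\subseteq E_{0}$), and your side remarks (the $-I$ example showing $R_{0}$ alone is insufficient) are accurate.
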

\begin{proof}
	Since $A$ is pseudomonotone on $R^{n}_{+},$ then $A$ is $P_0$ matrix by Theorem \ref{gowdatheorem}. Hence $A \in E_0.$ We have to show that $A^T$ satisfies the following property.
	\begin{center}
		$0 \neq x \geq 0,$ \ $A^Tx \geq 0,$ and $x^TA^Tx = 0$ $\implies$ $Ax \leq 0.$
	\end{center}
	
Since $A^T \in R_0$ then $0 \neq x \geq 0,$ $A^Tx = 0$ has no solution. Therefore for atleast one $i,$ $(A^Tx)_i > 0.$ Let us consider the vector $e_i$ which has one at the $i$th position and zeros elsewhere. Now consider $y = e_i + \lambda e_j,$ where $i \neq j$ and $\lambda \geq 0.$ Then, for any small $\delta > 0,$ we get
	\begin{center}
		$(x - \delta y)^T A (\delta y) = \delta[(A^Tx)_i + \lambda (A^Tx)_j - \delta y^TAy] \geq 0.$
	\end{center}
	By pseudomonotonicity, $(x - \delta y)^T A x \geq 0.$ Thus $y^TAx \leq 0.$ This gives $(Ax)_i + \lambda (Ax)_j \leq 0.$ As $\delta$ is arbitrary, $(Ax)_i \leq 0$ and $(Ax)_j \leq 0.$ Hence $Ax \leq 0.$
\end{proof}
	
	
\begin{corol}
		Suppose that $A$ is pseudomonotone on $R^{n}_{+},$ and satisfies one of the following conditions:
		\begin{description}
			\item[(i)] $A$ is invertible.
			\item[(ii)]$A$ is normal i.e. $AA^T = A^TA.$
		\end{description}
		Then $A^T \in E^s_0.$
\end{corol}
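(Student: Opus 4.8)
The plan is to reduce the statement to the preceding Theorem by observing that, in each of the two cases, the only property of $A^{T}$ that that proof actually consumes from the hypothesis ``$A^{T}\in R_{0}$'' is still available. First I would record the easy part: since $A$ is pseudomonotone on $R^{n}_{+}$, Theorem~\ref{gowdatheorem} gives $A\in P_{0}$, hence $A\in E_{0}$, and since a matrix lies in $E_{0}$ iff its transpose does, $A^{T}\in E_{0}$; so $A^{T}$ is semimonotone, and it only remains to verify the star implication
\[
0\neq x\geq 0,\quad A^{T}x\geq 0,\quad x^{T}A^{T}x=0 \ \Longrightarrow\ Ax\leq 0 .
\]

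Next I would fix such an $x$ and re-read the proof of the preceding Theorem. The hypothesis $A^{T}\in R_{0}$ is used there exactly once, namely to conclude that $A^{T}x\neq 0$, whence (using $A^{T}x\geq 0$) some coordinate $(A^{T}x)_{i}$ is strictly positive; with that index $i$ fixed, the rest of the argument --- choosing $y=e_{i}+\lambda e_{j}$, using $x^{T}A^{T}x=x^{T}Ax=0$ together with pseudomonotonicity, and then letting $\lambda\to 0$ and $\lambda\to\infty$ --- delivers $Ax\leq 0$ word for word. So it suffices to take care of the residual possibility $A^{T}x=0$.

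Then I would split into the two cases. In case (i), invertibility of $A$ makes $A^{T}$ invertible, so $A^{T}x=0$ would force $x=0$, contradicting $x\neq 0$; thus $A^{T}x\neq 0$ and we conclude as above. In case (ii), normality gives $\|Ax\|^{2}=x^{T}A^{T}Ax=x^{T}AA^{T}x=\|A^{T}x\|^{2}$ for every $x$, so $\ker A^{T}=\ker A$; hence $A^{T}x=0$ implies $Ax=0\leq 0$, which already establishes the desired implication in that sub-case, while if $A^{T}x\neq 0$ we again finish by the argument recalled above. In either case $A^{T}$ satisfies the star condition, so $A^{T}\in E^{s}_{0}$.

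The point worth stressing --- and the reason this is not a one-line corollary --- is that neither (i) nor (ii) actually forces $A^{T}\in R_{0}$ (for instance, a nonsingular skew-symmetric $A$ is pseudomonotone on $R^{n}_{+}$ yet neither $A$ nor $A^{T}$ is $R_{0}$), so one cannot literally quote the preceding Theorem; the real work lies in reopening its proof and checking that invertibility, respectively normality, supplies precisely the weaker input that proof needs. The mildly delicate step is case (ii), where $\ker A^{T}$ may contain nonzero nonnegative vectors --- the resolution being that on such vectors the conclusion $Ax\leq 0$ holds automatically.
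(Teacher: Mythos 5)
Your proposal is correct, and it does more than the paper does: the paper states this corollary immediately after the theorem on $A^{T}\in R_{0}$ and offers no proof at all, evidently treating it as a direct consequence of that theorem. As you rightly point out, the literal deduction is not available, since neither invertibility nor normality (even together with pseudomonotonicity) forces $A^{T}\in R_{0}$ --- your nonsingular skew-symmetric example, say $n=2$ with $a_{12}=1$, $a_{21}=-1$, is pseudomonotone and invertible yet LCP$(0,A^{T})$ has the nontrivial solutions $x=(k,0)^{T}$, $k>0$. Your repair is exactly the right one and mirrors what the theorem's proof actually needs: the $R_{0}$ hypothesis enters only to rule out $A^{T}x=0$ and thereby produce an index $i$ with $(A^{T}x)_{i}>0$, after which the perturbation argument with $y=e_{i}+\lambda e_{j}$ and pseudomonotonicity yields $Ax\leq 0$ verbatim (note the paper's ``as $\delta$ is arbitrary'' should read ``as $\lambda$ is arbitrary''). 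Handling the residual case $A^{T}x=0$ by invertibility in case (i), and by $\ker A^{T}=\ker A$ (from $\lVert Ax\rVert^{2}=x^{T}A^{T}Ax=x^{T}AA^{T}x=\lVert A^{T}x\rVert^{2}$) in case (ii), so that $Ax=0\leq 0$ holds automatically, closes the argument correctly. In short, your route is a careful reopening of the theorem's proof where the paper relies on an unstated (and, as stated, unjustified) shortcut; what your version buys is an actually complete proof of the corollary, and it also makes transparent why hypotheses (i) and (ii) are the natural substitutes for $A^{T}\in R_{0}$.
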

Note that $C_{0}^{*} \subseteq E_0^s.$
\bed
A matrix $A$ is said to be completely semimonotone star ($\bar E_0^s$) matrix if all its principal submatrices are semimonotone star matrix.
\eed

We say that $E_{0}^{s}$ is not a complete class which can be illustrated with the following example.

\begin{examp} Consider the matrix
	$ \dsp {
		A = \left[ \begin{array}{rrr}
		0 & 1 & 1\\
		2 & 0 & 1\\
		-4 & -5 & 0
		\end{array} \right]
	} .$  Note that $A\in E_0^s.$
	It is easy to show that $ \dsp {
		A_{12} = \left[ \begin{array}{rr}
		0 & 1 \\
		2 & 0
		\end{array} \right]  } $ is not a $E_0^s$-matrix.
\end{examp}
 \begin{theorem} \label{psbd res}
 	Let $A \in$ PSBD \,$\cap \ E_0$ with rank$(A) \geq 2.$ Further suppose $A$ is not a skew symmetric matrix. Then $A \in E^s_0$-matrix.
 \end{theorem}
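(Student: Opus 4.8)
The plan is to combine the structural dichotomy for PSBD matrices (Theorem \ref{psbd}) with the refinement in Theorem \ref{SS} to reduce the claim to cases that are either trivial or already known. Since $A\in \mbox{PSBD}$ with $\mathrm{rank}(A)\geq 2$, Theorem \ref{psbd} tells us that one of three things happens: (i) $A$ is PSD, (ii) $A+A^T\leq 0$, or (iii) $A\in C_0^*$. I would handle these three cases separately, using throughout the fact that $A\in E_0$ is given, so we only need to verify the star condition $x^TAx=0,\ Ax\geq 0,\ x\geq 0 \implies A^Tx\leq 0$.

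First I would dispose of case (iii): if $A\in C_0^*$, then since we already noted $C_0^*\subseteq E_0^s$ in the paragraph preceding the statement, we are done immediately. Next, case (i): suppose $A$ is PSD and take $x\geq 0$ with $x^TAx=0$ and $Ax\geq 0$. Because $A$ is PSD, $x^TAx=0$ forces $(A+A^T)x=0$ (the standard fact that a PSD quadratic form vanishing at a point implies the point lies in the kernel of the symmetric part). Hence $A^Tx=-Ax\leq 0$, which is exactly the star condition; so $A\in E_0^s$. Finally, case (ii): suppose $A+A^T\leq 0$. Here I invoke the hypothesis that $A$ is not skew-symmetric, so Theorem \ref{SS} applies and gives $A\leq 0$. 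But then for any $x\geq 0$ we automatically have $A^Tx\leq 0$ (entrywise, a nonpositive matrix times a nonnegative vector is nonpositive), so the star implication holds vacuously in its conclusion and $A\in E_0^s$.

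The main point requiring care is making sure the three cases of Theorem \ref{psbd} are genuinely exhaustive under our hypotheses and that the "not skew-symmetric" assumption is only needed in case (ii) — in cases (i) and (iii) it is not used, which is fine. A secondary subtlety is the PSD argument in case (i): one should state cleanly why $x^TAx = 0$ and $A$ PSD imply $(A+A^T)x = 0$. This follows because $x^T A x = \tfrac12 x^T(A+A^T)x$, the symmetric matrix $A+A^T$ is positive semidefinite, and a positive semidefinite form vanishes at $x$ only if $x$ is in its null space; then $A^Tx = (A+A^T)x - Ax = -Ax \le 0$. I do not anticipate a serious obstacle here — the real content is simply recognizing that Theorems \ref{psbd} and \ref{SS}, together with the inclusion $C_0^*\subseteq E_0^s$ and the given $E_0$ membership, already cover every possibility.
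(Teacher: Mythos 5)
Your proposal is correct and takes essentially the same route as the paper's own proof: the three-way case split from Theorem \ref{psbd}, the inclusion $C_0^*\subseteq E_0^s$, and Theorem \ref{SS} together with the non-skew-symmetry hypothesis to obtain $A\leq 0$ in the case $A+A^T\leq 0$. The only difference is that you spell out the verifications for the PSD case and the $A\leq 0$ case, which the paper states without detail.
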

\begin{proof}
	Let $A$ be PSBD matrix with rank$(A) \geq 2.$ By the Theorem \ref{psbd} we have following three cases.\\
		Case I: $A$ is PSD matrix. This implies $A \in E^s_0.$\\
		Case II: $A \in C_0^*.$ This implies  $A \in E^s_0.$\\
		Case III: $(A + A^T) \leq 0.$ As $A$ is a PSBD matrix with rank$(A) \geq 2,$ $A \leq 0$ by the Theorem \ref{SS}. Note that $A \in E_0.$ To show $A \in E^s_0,$ consider $x^TAx=0,\;Ax\geq 0,\;x\geq 0$ which implies $A^Tx \leq 0.$ Hence $A \in E^s_0.$
	\end{proof}
\begin{examp}
	 Consider the matrix
		$ \dsp {
			A = \left[ \begin{array}{rr}
			0 & 3\\
			-1 & 0
			\end{array} \right]
		} .$ It is easy to show that $A$ is PSBD matrix with rank$(A) \geq 2.$ Hence by Theorem \ref{psbd res}, $A \in E_0^s.$
\end{examp}

\section{PPT based matrix classes under $E_0^s$-property}
We consider some PPT based matrix classes with $E^{s}_{0}$-property in the context of linear complementarity problem to show that these classes are processable by Lemke's algorithm under certain condition. We settle the processability of Lemke's algorithm through identification of a new subclass of $P_0\cap Q_0$-matrices.
\begin{defn}
	A matrix $A \in E^{s}_{0}$ is said to be $\tilde{E^s_0}$-matrix if for $x \in$ SOL$(0, A),$ $(A^Tx)_i \neq 0$ $\implies$ $(Ax)_i \neq 0$ $\forall$ $i.$ 
\end{defn}
\begin{examp}
	Consider	$ \dsp{
		A = \left[ \begin{array}{rrr}
		0 & 1 & 1\\
		2 & 0 & 2\\
		-2 & -4 & 0
		\end{array} \right].
	}$
Note that, $A \notin C^{*}_0.$ For $k > 0$ and 
	$ \dsp{
	x = \left[ \begin{array}{r}
	0 \\
	0 \\
	k
	\end{array} \right],
}$ 
$x \geq 0, \ Ax \geq 0, \ x^TAx = 0$ implies $A^Tx \leq 0.$ Hence $A \in E^s_0.$ Now $ \dsp{
	A^Tx = \left[ \begin{array}{r}
	-2 \\
	-4\\
	0
	\end{array} \right]
}$ and $ \dsp{
Ax = \left[ \begin{array}{r}
1 \\
2 \\
0
\end{array} \right]
}.$ Therefore $\forall$ $i,$ $(A^Tx)_i \neq 0$ $\implies$ $(Ax)_i \neq 0.$ Hence $A \in \tilde{E^s_0}.$ 
\end{examp}
\begin{remk}
	It is easy to show that $C_0^+ \subseteq \tilde{E^s_0}.$
\end{remk}
Note that not every $E^s_0$-matrix is $\tilde{E^s_0}$-matrix. We consider the following example from the paper \cite{jana2018}.

\begin{examp}
	Consider $ \dsp{
		A = \left[ \begin{array}{rrr}
		1 & -1 & -2\\
		-1 & 1 & 0\\
		0 & 0 & 1
		\end{array} \right].
	}$ Note that $A \in P_0.$ Hence $A \in E_0.$ The only nonzero vectors in SOL$(0, A)$ are of the form  
$ \dsp{
x = \left[ \begin{array}{r}
k \\
k \\
0
\end{array} \right]
}$ 
for $k > 0.$ Now for such $x,$ $A^Tx \leq 0$ holds. Hence $A \in E^s_0.$ Now $ \dsp{
A^Tx = \left[ \begin{array}{r}
0 \\
0\\
-2k
\end{array} \right]
}$ and $ \dsp{
Ax = \left[ \begin{array}{r}
0 \\
0 \\
0
\end{array} \right]
}.$ Note that $(A^Tx)_3 \neq 0$ but $(Ax)_3 = 0.$ Hence $A \notin \tilde{E^s_0}.$ 
\end{examp}

\bt \label{L_2}
Let $A\in R^{n\times n}\cap \tilde{E_{0}^s}.$ Assume that every legitimate PPT of $A$ is either almost  $E$ or completely $\tilde{E_{0}^s}.$ Then $A\in P_{0}.$
\et

\begin{proof} Since $A \in \tilde{E_0^s}$, we say $x \in$ SOL$(0, A)$ which implies $A^Tx \leq 0.$ Again by definition $(A^Tx)_i \neq 0$ which implies $(Ax)_i \neq 0$ $\forall$ $i.$ Now by taking $D_{2} = I,$ where $I$ represents the identity matrix. Then $D_{2}x = Ix \neq 0.$ So $(D_{1}A + A^TI)x = 0$ by taking,
\begin{center}
\begin{displaymath}
	D_{ii}=
	\Bigger[14]\{\begin{array}{@{}cl}
		\dfrac{-(A^Tx)_{i}}{(Ax)_{i}}, & (Ax)_{i} \neq 0,\\[3mm]
		0,              & (Ax)_{i} = 0 .
	\end{array}
\end{displaymath}

\end{center}
Where $D_{ii}$ denotes the $i$th diagonal of $D_{1}.$ So $A \in E_{0}^s\cap L_{2}.$ Therefore $A \in Q_0$ by the Theorem \ref{L}. For rest of the proof, we follow the approach given by Das \cite{akd}. However for the sake of completeness we give the proof.
Note that every legitimate PPT of $A$ is either almost  $E$ or completely $\tilde{E_{0}^s}.$ Suppose $M$ is a PPT of $A$ so that  $M\in$ almost $E.$ Then all principal submatrices of $M$ upto $n-1$ order are $\bar{Q}.$ Hence $M \in \bar{Q_0}.$ Since the PPT of $A$ is either almost $E$ or completely $\tilde{E_{0}^s},$ it follows that all proper principal submatrices are $P_0.$

Now to complete the proof, we need to show that $\det A \geq 0.$
Suppose not. Then det $A <0.$ This implies that $A$
is an almost $P_{0}$-matrix. Therefore $A^{-1} \in N_{0}.$ If $A^{-1}\in$ almost $E$ then this contradicts that the diagonal entries are positive.
Therefore det $A\geq 0.$ It follows that $A\in P_{0}.$

\end{proof}

\bcl
Let $A\in R^{n\times n}\cap \tilde{E_{0}^s}.$ Assume that every legitimate PPT of $A$ is either almost $E$ or completely $\tilde{E_{0}^s}.$ Then LCP$(q,A)$ is processable by Lemke's algorithm.
\ecl
 Earlier Das \cite{akd} proposed \textit{exact order 2 $C_0^f$}-matrices in connection with PPT based matrix classes. We define \textit{exact order k $C_0^f$}-matrices.
 \begin{defn}
 	$A$ is said to be an \textit{exact order k $C_0^f$}-matrix if its PPTs are either exact order \textit{k} $C_0$ or $E_0$ and there exists at least one PPT $M$ of $A$ for some
 	$\alpha \subset \{1, 2, \cdots, n\}$ that is exact order \textit{k} $C_0.$
 \end{defn}
 We prove the following theorem.
\begin{theorem}
	Let $A \in \tilde{E^{s}_{0}} \ \cap$ exact order $k$ $C_0^f$ $(n \geq k+2).$ Assume that PPT of $A$ has either exact order $k$ $C_0$ or $E_0$ with at least $k$ positive diagonal entries. Then LCP$(q, A)$ is processable by Lemke's algorithm.
\end{theorem}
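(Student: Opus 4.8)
The plan is to deduce that $A \in P_0 \cap Q_0$ and then invoke the Aganagic--Cottle theorem \cite{aganagic}, which guarantees that Lemke's algorithm processes LCP$(q,A)$ for every $A \in P_0 \cap Q_0$; this is the same route that yields the Corollary following Theorem \ref{L_2}.

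For the $Q_0$ membership I would argue exactly as in the first part of the proof of Theorem \ref{L_2}. Since $A \in \tilde{E^s_0}$ we have $A \in E_0$, and for any $0 \neq x \in$ SOL$(0,A)$ we know $A^Tx \le 0$ together with the implication $(Ax)_i = 0 \Rightarrow (A^Tx)_i = 0$; hence, setting $D_2 = I$ and $(D_1)_{ii} = -(A^Tx)_i/(Ax)_i$ when $(Ax)_i \neq 0$ and $(D_1)_{ii} = 0$ otherwise, one gets $D_2 x \neq 0$ and $(D_1A + A^TD_2)x = 0$. Thus $A \in E_0 \cap L_2 = L$, so $A \in Q_0$ by Theorem \ref{L}.

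The core of the argument is $A \in P_0$, which I would establish by adapting the treatment of the exact order $2$ case due to Das \cite{akd}, proceeding by induction on $n$. The first step is that every proper principal submatrix of $A$ is $P_0$: using $n \ge k+2$, a legitimate PPT that is copositive of exact order $k$ is copositive up to order $n-k \ge 2$, so any of its $(n-k+1)$-order principal submatrices that fails to be copositive is almost copositive and hence, by Theorem \ref{das}, positive semidefinite of order $n-k$ (and positive definite of order $n-k-1$), which places the relevant submatrices in $P_0$; a legitimate PPT that is $E_0$ with at least $k$ positive diagonal entries is controlled through its diagonal together with the exact order $k$ $C_0^f$ structure of the remaining PPTs; and the inductive hypothesis handles the proper principal submatrices directly once one checks they again satisfy the hypotheses in a smaller dimension. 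Given this, it remains to verify $\det A \ge 0$: if $\det A < 0$ then $A$ is an almost $P_0$-matrix, so $A^{-1} \in N_0$ and in particular has nonpositive diagonal; but $A^{-1}$ is a legitimate PPT of $A$, hence either $E_0$ with at least $k \ge 1$ positive diagonal entries (impossible inside $N_0$) or copositive of exact order $k$, in which case the almost copositive / Theorem \ref{das} analysis carried out inside an $N_0$-matrix contradicts the nonsingularity of $A^{-1}$. Either way $\det A \ge 0$, so $A \in P_0$, and the theorem follows.

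The step I expect to be the main obstacle is proving $A \in P_0$, and within it the bookkeeping of exact order $k$ copositivity across all legitimate PPTs: one must verify that requiring at least $k$ positive diagonal entries on the $E_0$-type PPTs is exactly what excludes the mismatch between diagonal sign patterns and copositivity order that would otherwise permit a negative principal minor. The closing determinant case --- ruling out a nonsingular $N_0$-matrix that is simultaneously copositive of exact order $k$ --- will require the most careful use of Theorem \ref{das}.
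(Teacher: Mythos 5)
Your overall route coincides with the paper's: establish $A\in P_0$, get $Q_0$ from $A\in\tilde{E^s_0}$ via the $L_2$ construction of Theorem \ref{L_2} and Theorem \ref{L}, and finish with Aganagic--Cottle; and your determinant mechanism (negative minor $\Rightarrow$ almost $P_0$ $\Rightarrow$ inverse in $N_0$ $\Rightarrow$ clash with positive diagonal entries) is exactly the paper's. The genuine gap is in how you propose to get nonnegativity of the \emph{proper} principal minors. Your induction on $n$ needs the proper principal submatrices of $A$ to ``again satisfy the hypotheses in a smaller dimension,'' and they need not: the paper itself exhibits an $E^s_0$-matrix with a principal submatrix that is not $E^s_0$ (so $\tilde{E^s_0}$ is not hereditary), and neither the exact order $k$ $C_0^f$ condition, nor the requirement that every PPT be $E_0$ with at least $k$ positive diagonal entries, nor the size restriction $n\ge k+2$ passes to submatrices. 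The paper avoids induction on submatrices altogether: it fixes one PPT $M$ of $A$ that is copositive of exact order $k$ (such an $M$ exists by the definition of exact order $k$ $C_0^f$), uses Theorem \ref{das} to make every order-$(n-k)$ principal submatrix of $M$ PSD, and then climbs one order at a time. At order $m\ge n-k+1$, the inverse of an offending submatrix $B$ is a principal submatrix of the PPT of $A$ obtained by pivoting on $B$, and that submatrix omits at most $k-1$ indices, so at least one of the $k$ guaranteed positive diagonal entries lies inside $B^{-1}$, contradicting the $N_0$ block sign pattern of the inverse of an almost $P_0$-matrix. This pigeonhole calibration between ``order at least $n-k+1$'' and ``at least $k$ positive diagonal entries'' is the real content of the hypothesis, and your sketch never uses it: the only place you invoke the positive-diagonal condition is at the full-size determinant step, where $k\ge 1$ would already suffice --- a sign that the intermediate orders are not actually handled.

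A second, smaller defect: in your determinant case you dismiss the possibility that $A^{-1}$ is copositive of exact order $k$ by asserting that Theorem \ref{das} ``inside an $N_0$-matrix contradicts the nonsingularity of $A^{-1}$.'' No such contradiction follows: an $N_0$-matrix that is copositive up to order $n-k$ merely has zero diagonal, and nonsingular $N_0$-matrices with zero diagonal exist, so this branch needs an actual argument (the paper, for what it is worth, silently places the pivoted matrix in the $E_0$-with-positive-diagonal branch and does not discuss this case either). As written, your proposal reproduces the paper's skeleton but leaves open precisely the steps --- replacing heredity by the order-by-order argument inside a distinguished PPT, and closing the copositive branch of the case analysis --- that the quantitative hypotheses of the theorem were designed to handle.
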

\begin{proof}
	We show that $A\in P_0.$ Suppose $M$ is a PPT of $A$ so that  $M\in$ exact order \textit{k} $C_{0}.$ By Theorem \ref{das}, all
	the principal submatrices of order $(n-k)$ of $M$ are PSD. Now to show $M^{(n-k+1)} \in P_{0}$
	it is enough to show that det $M^{(n-k+1)}\geq 0.$ Suppose not. Then $\det M^{(n-k+1)} < 0.$ We consider $B=M^{(n-k+1)}$
	is an almost $P_{0}$-matrix. Therefore $B^{-1}\in N_{0}$ and there exists a nonempty subset $\al\sbs \{1,2,\ldots,n-1\}$ satisfying \cite{akd}
	\be \label{equation}
	B^{-1}_{\al\al}\leq 0,\;B^{-1}_{\bar{\al}\bar{\al}}\leq 0,\;B^{-1}_{\al\bar{\al}}\geq
	0\mbox{ and }B^{-1}_{\bar{\al}\al}\geq 0.
	\ee
	By definition $B^{-1}\in$ $E_{0}$ with atleast $k$ positive diagonal entry. This contradicts Equation \ref{equation}. Therefore $\det M^{(n-k+1)}\geq 0.$
	Now by the same argument as above, we show that  det $M\geq 0$. Therefore it
	follows that of $A \in P_{0}.$ Hence $A \in P_0 \cap \tilde{E^{s}_{0}}.$ So LCP$(q, A)$ is processable by Lemke's Algorithm.
\end{proof}
	
A matrix $A$ is said to satisfy $(++)$-property if there exists a matrix $X \in K_0$ such that $AX$ is a $Z$-matrix. For details see \cite{chu}. Now we establish the condition under which a matrix $A$ is sufficient satisfying $(++)$-property.
\bt
Suppose $A \in R^{n \times n} \cap E_0$ satisfies $(++)$-property. If each legitimate PPT of $A$ are either almost $C_0$ or completely $\tilde{E_{0}^{s}}$ with full rank second order principal submatrices, then $A$ is sufficient.
\et
\begin{proof}
	As $A \in E_0$ with $(++)$-property. Hence $A \in P_0$ by Theorem \ref{chu}. Suppose $M$ be a PPT of $A.$ We consider the following cases.\\	
		
		\NI Case I: If $M$ be almost $C_0,$ then by the Theorem \ref{das}, $M$ is PSD of order $(n-1).$ Hence $M$ is PSD of order 2 also. So by the Theorem \ref{guu}, $M$ is sufficient of order $(n-1).$ \\
		\NI Case II: If $M$ is completely $\tilde{E_{0}^{s}}$ then sign pattern of all $2 \times 2$ submatrices of $M$ will be in the following subcases: \\
		\NI Subcase I: If the sign pattern is $ \dsp {
			\left[ \begin{array}{rr}
			0 & +\\
			- & 0
			\end{array} \right]
		}$ or $ \dsp {
			\left[ \begin{array}{rr}
			0 & -\\
			+ & 0
			\end{array} \right]
		} $ then these two patterns are sufficient. \\
		\NI Subcase II: If the sign pattern is $ \dsp {
			\left[ \begin{array}{rr}
			+ & +\\
			- & +
			\end{array} \right]
		}$ or $ \dsp {
			\left[ \begin{array}{rr}
			+ & -\\
			+ & +
			\end{array} \right]
		} $ then by the Theorem \ref{valiaho_res} these two patterns are sufficient.\\
		\NI Subcase III: If the sign pattern is $ \dsp {
			\left[ \begin{array}{rr}
			+ & +\\
			+ & +
			\end{array} \right]
		}$
		then by the Theorem \ref{valiaho_res} this pattern is sufficient.\\
		\NI Subcase IV: If the sign pattern is $ \dsp {
			\left[ \begin{array}{rr}
			+ & -\\
			- & +
			\end{array} \right]
		}$
		then by the Theorem \ref{valiaho_res} this pattern is sufficient.\\
\NI Then for every PPT, $A_{\al \al}$ of order $2$ are sufficient. By the Theorem \ref{guu}, $A$ is sufficient.
\end{proof}

\section{Properties of SOL$(q, A)$ under $E_{0}^{s}$-property}
We show that solution set of LCP$(q, A)$ is connected if $A \in E^{s}_{0}$ with the following structure
$A$ =	$\begin{bmatrix}
A_{\al\al} & + \\
- & 0 \\
\end{bmatrix},$ where $A_{\alpha \alpha} \in R^{(n-1) \times (n-1)}.$
\bt \label{structure}
Let $A \in R^{n \times n}$ with $A$ =	$\begin{bmatrix}
A_{\al\al} & + \\
- & 0 \\
\end{bmatrix}$ and $A_{\al \al} \in P_0.$ Then $A \in \tilde{E_{0}^{s}}$-matrix.
\et
\begin{proof} First we show that $A$ =	$\begin{bmatrix}
	A_{\al\al} & + \\
	- & 0 \\
	\end{bmatrix}$ with $A_{\al\al} \in P_{0}$ is $E_0$-matrix. Let us consider $(u_\al, v) \in R^{n}_{+}$ be a given vector where $\al = \{1, 2, \cdots, (n-1)\}.$ Without loss of generality we assume $u_\al \neq 0.$ Now as $A_{\al \al} \in P_0,$ we can write $A_{\al \al} \in E_0.$ By semimonotonicity $A_{\al \al}$ $\exists$ an index $i$ such that $(u_\al)_i > 0$ and $(A_{\al \al}u_\al)_i \geq 0.$ For such an index $i,$ $(A_{\al \al}u_\al + v)_i \geq 0.$ Hence $A \in E_0.$ We consider the following two cases:\\
	\NI Case I: Firstly we take $x = [x_{\alpha}, 0]^T,$ where $\alpha \in \{1, 2, \cdots, (n-1)\}.$ Then suppose $x^TAx = 0, \ x \geq 0,$ but in this case $Ax \ngeq 0.$\\
	\NI Case II: Take $x = [x_{\alpha}, x_{\bar{\alpha}}]^T,$ where $x_{\alpha}, x_{\bar{\alpha}} \geq 0.$ Then suppose for this $x,$ \ $x^TAx = 0,$ but $Ax \ngeq 0.$
So the vector $x$ for which $x^TAx = 0, \ Ax \geq 0, \ x \geq 0,$ are the zero vector and $[0, 0,\cdots, c]^T, c>0$ and for both cases $A^Tx \leq 0.$ \\
Hence $A$ is $E_{0}^{s}$ matrix. Now it is easy to show that for $x = [0, 0,\cdots, c]^T,$ $(A^Tx)_i \neq 0$ $\implies$ $(Ax)_i \neq 0$ for each $i.$ Hence $A \in \tilde{E^s_0}.$ 
\end{proof}
\vsp
\begin{remk} \label{connected}
Suppose $A \in R^{n\times n}$ with $A =$$\begin{bmatrix}
A_{\al\al} & + \\
- & 0 \\
\end{bmatrix}$ and $A_{\al\al} \in P_{0} \cap Q.$ Then $A$ is a  connected matrix $(E_{c})$ from the Theorem \ref{tp} of \cite{tpsriparna}. \end{remk}
\begin{remk}
	Suppose $A \in R^{n\times n}$ with $A =$	$\begin{bmatrix}
	A_{\al\al} & + \\
	- & 0 \\
	\end{bmatrix}$ and $A_{\al\al} \in P_{0} \cap Q.$ Now as $A \in E_{c}$ so $A \in E_{c} \cap Q_{0}$ and by the Theorem \ref{cao}, Lemke's algorithm processes LCP($q,A$).
\end{remk}

\bt
Suppose that $A \in R^{n\times n}$ with $A =$	$\begin{bmatrix}
A_{\al\al} & + \\
- & 0 \\
\end{bmatrix}$ and $A_{\al\al} \in P_{0} \cap Q.$ Then $A \in P_0.$
\et
\begin{proof}
Since $A \in R^{n\times n}$ with $A =$	$\begin{bmatrix}
A_{\al\al} & + \\
- & 0 \\
\end{bmatrix}$ and $A_{\al\al} \in P_{0} \cap Q$ then by the Remark \ref{connected}, $A \in E_c.$ Again by the Theorem \ref{fullysemi} $A \in E_{0}^{f}.$ As $A \in \tilde{E^{s}_0}$ by the Theorem \ref{structure}, $A \in L$ by the Theorem \ref{L_2}. By applying degree theory, $A \in P_0$ in view of Corollary 3.1 of \cite{mohan2001classes}.
\end{proof}

\begin{remk}
	Gowda and Jones \cite{jones-gowda} raised the following open problem: Is it true that $P_0 \cap Q_0 = E_c \cap Q_0?$ Cao and Ferris \cite{cao-ferris} showed that $P_0 \cap Q_0 = E_c \cap Q_0$ is true for second order matrices. We settle the above open problem partially by considering a subclass $P_0 \cap \tilde{E^s_0}$ of $P_0 \cap Q_0.$ 
\end{remk}

In general, SOL$(q, A)$ is not bounded for every $q \in int \ pos[-A, I]$ and $A \in \tilde{E^{s}_{0}}.$ Here we establish the following results.
\bt \label{bounded}
Let $A\in \tilde{E_{0}^s}$ and SOL$(q, A)$ is not bounded for all $q \in int \ pos[-A, I].$ Suppose $r \in K(A)$ and $\exists$ vectors $z$ and $z^{\lambda} = \hat{z} + \lambda z$ such that $z \in$ SOL$(0, A) \setminus \{0\},$ $z^{\lambda} \in$ SOL$(q, A)$ $\forall$ $\lambda \geq 0$ and $w \in$ SOL$(r, A).$ Then $(z^{\lambda} - w)_{\alpha}(A(z^{\lambda}-w))_{\alpha} < 0$ $\forall$ $\alpha = \{i : z_i \neq 0\}.$
\et
\begin{proof} Suppose $A \in \tilde{E_{0}^{s}}$ and  SOL$(q, A)$ is not bounded for all $q \in int \ pos[-A, I].$ Note that $A \in E_{0}^{s} \ \cap \ L_2$ as shown in Theorem \ref{L_2} and $q \in int \ pos[-A, I]$ and there exist vectors $z$ and $z^\lambda = \hat{z} + \lambda z$ such that $z \in$ SOL$(0, A) \setminus  \{0\}$ and $z^\lambda \in$ SOL$(q, A)$ $\forall \lambda \geq 0.$
We select an $r \in K(A)$ such a way that $\alpha = \{i : z_i \neq 0\}.$ Then
$r_i - q_i < 0$
Now for sufficiently large $\lambda,$ $(z^\lambda - w)_\alpha > 0$ and $w \in$ SOL$(r, A).$ We write
\begin{center}
	$(A(z^\lambda - w))_\alpha = -q_\alpha - (Aw)_\alpha \leq - q_\alpha + r_\alpha < 0.$
\end{center}
This implies
\begin{center}
	$(z^\lambda - w)_\alpha(A(z^\lambda - w))_\alpha < 0.$
\end{center}
\end{proof}
\NI However strict inequality does not hold in case of $\alpha \neq \{i : z_i \neq 0\}.$

\begin{theorem}
	Let $A\in \tilde{E_{0}^s}$ and SOL$(q, A)$ is not bounded for all $q \in int \ pos[-A, I].$ Suppose $r \in K(A)$ and $\exists$ vectors $z$ and $z^{\lambda} = \hat{z} + \lambda z$ such that $z \in$ SOL$(0, A) \setminus \{0\},$ $z^{\lambda} \in$ SOL$(q, A)$ $\forall$ $\lambda \geq 0$ and $\tilde{z} \in$ SOL$(r, A).$ Then $(z^{\lambda} - w)_{\alpha}(A(z^{\lambda}-w))_{\alpha} \leq 0$ $\forall$ $\alpha = \{i : \hat{z}_{i} \geq 0, z_{i} = 0\}.$
\end{theorem}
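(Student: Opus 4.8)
The plan is to imitate the proof of Theorem \ref{bounded}, replacing its strict estimate by a non-strict one that survives on the complementary index set. First I would note, exactly as in the proof of Theorem \ref{L_2}, that $A\in\tilde{E^s_0}$ implies $A\in E_0^s\cap L_2$, hence $A\in L$ and $A\in Q_0$ by Theorem \ref{L}; consequently, for any $r\in K(A)$ the relation FEA$(r,A)\neq\emptyset$ upgrades to SOL$(r,A)\neq\emptyset$, so a solution $w\in$ SOL$(r,A)$ (the vector called $\tilde z$ in the statement) is indeed available. Since $q\in int\ pos[-A,I]=int\ K(A)$, I would then choose the reference vector $r\in K(A)$ so that $r_i=q_i$ for every $i\in\alpha=\{i:\hat z_i\geq 0,\ z_i=0\}$ --- for instance $r=q$, in which case $w$ may even be taken in the already nonempty set SOL$(q,A)$, or more generally any $r\in K(A)$ agreeing with $q$ on $\alpha$. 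On $\alpha$ the iterate $z^\lambda=\hat z+\lambda z$ satisfies $z^\lambda_i=\hat z_i$, independently of $\lambda$.

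The computational core is a coordinatewise estimate. Fix $i$. Complementarity of $z^\lambda\in$ SOL$(q,A)$ gives $z^\lambda_i(Az^\lambda)_i=-z^\lambda_i q_i$, and complementarity of $w\in$ SOL$(r,A)$ gives $w_i(Aw)_i=-w_i r_i$; feasibility $q+Az^\lambda\geq 0$ together with $w_i\geq 0$ gives $w_i(Az^\lambda)_i\geq -w_i q_i$, and feasibility $r+Aw\geq 0$ together with $z^\lambda_i\geq 0$ gives $z^\lambda_i(Aw)_i\geq -z^\lambda_i r_i$. Expanding $(z^\lambda-w)_i(A(z^\lambda-w))_i=z^\lambda_i(Az^\lambda)_i-z^\lambda_i(Aw)_i-w_i(Az^\lambda)_i+w_i(Aw)_i$, substituting the two identities and applying the two inequalities, yields $(z^\lambda-w)_i(A(z^\lambda-w))_i\leq (z^\lambda_i-w_i)(r_i-q_i)$. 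For $i\in\alpha$ the choice $r_i=q_i$ makes the right-hand side vanish, so $(z^\lambda-w)_i(A(z^\lambda-w))_i\leq 0$ for each $i\in\alpha$, and hence $(z^\lambda-w)_\alpha(A(z^\lambda-w))_\alpha\leq 0$. I would close with the remark that this is precisely the borderline situation of Theorem \ref{bounded}: on $\{i:z_i\neq 0\}$ one can push $r_i$ strictly below $q_i$ while $z^\lambda_i\to\infty$, forcing a strict inequality, whereas on the complement only $r_i=q_i$ is available and the bound degenerates to equality in the worst case.

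The step that needs the most care is the selection of $r$: one must verify that $r$ can be taken inside $K(A)$ with $r_\alpha=q_\alpha$ and with SOL$(r,A)\neq\emptyset$ simultaneously, which is where $q\in int\ K(A)$ and $A\in Q_0$ enter (the choice $r=q$ already does the job). Everything else is the routine expansion above, once one also identifies the $\tilde z$ of the hypothesis with the $w$ of the conclusion.
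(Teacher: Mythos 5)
Your proof is correct and takes essentially the same route as the paper: expand $(z^{\lambda}-w)_i\,(A(z^{\lambda}-w))_i$ coordinatewise and control the four terms by the complementarity of $z^{\lambda}\in$ SOL$(q,A)$ and $w=\tilde{z}\in$ SOL$(r,A)$ together with feasibility, so that the sign of $r_i-q_i$ on $\alpha$ forces the product to be nonpositive. The only cosmetic difference is that the paper selects $r$ with $r_i=q_i$ where $\hat{z}_i>0$ and $r_i>q_i$ where $\hat{z}_i=0$ and argues the two cases separately, whereas your single estimate $(z^{\lambda}-w)_i\,(A(z^{\lambda}-w))_i\leq(z^{\lambda}_i-w_i)(r_i-q_i)$ with $r_{\alpha}=q_{\alpha}$ (e.g.\ $r=q$) handles both at once.
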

\begin{proof}
The first part of the proof follows from the proof of Theorem \ref{bounded}. Now
we select an $r \in K(A)$ and consider $\alpha = \{i : z_i \neq 0\}.$ We select an $r \in K(A)$ and consider $\alpha = \{i : z_i = 0\}.$ Then
$r_i - q_i \geq 0.$ Now for sufficiently large $\lambda,$ $(z^\lambda - w)_\alpha > 0$ and $w \in$ SOL$(r, A).$ Now we consider following two cases.\\
Case I:
Let $\alpha = \{i : \hat{z}_i > 0, z_i = 0\}.$ Then $r_i - q_i = 0.$
We write
\begin{center}
	$ \begin{array}{ll}
	(z^\lambda - w)_i(A(z^\lambda - w))_i&=(z^\lambda - w)_i((Az^\lambda)_i-(Aw)_i + q_i - r_i)\\
	&=z^\lambda_{i}((Az^\lambda)_i + q_i) - w_i((Az^\lambda)_i + q_i) +\\ & z^\lambda_{i}(-(Aw)_i - r_i) - w_i(-(Aw)_i - r_i)\\
	&\leq 0.
	\end{array} $
\end{center}
Case II:
Let $\alpha = \{i : z_i = \hat{z}_i = 0\}.$ Then $r_i - q_i > 0.$
We write
\begin{center}
	$ \begin{array}{ll}
	(z^\lambda - w)_i(A(z^\lambda - w))_i&=-w_i((Az^\lambda)_i - (Aw)_i)\\
	&\leq -w_i((Az^\lambda)_i - (Aw)_i + q_i - r_i)\\
	&=-w_i(Az^\lambda + q)_{i} + w_i(Aw + r)_i\\
	&=-w_i(Az^\lambda + q)_{i}\\
	&\leq 0.
	\end{array} $
\end{center}
\end{proof}

Now we show the condition for which SOL$(q, A)$ is compact where $A \in \tilde{E_{0}^{s}}.$ To establish the result we use game theoretic approach and Ville's theorem of alternative.
\bt \label{R_0thm}
Suppose $A\in \tilde{E_{0}^{s}}$ with $v(A) > 0.$ Then SOL$(q, A)$ is compact.
\et
\begin{proof}
	By theorem \ref{L_2}, $\tilde{E_{0}^{s}} \subseteq E_0 \cap L_2.$ Since $v(A) > 0,$ So $A \in E_{0}^{s} \cap Q.$ Now to establish $A\in R_0$ it is enough to show that LCP($0,A$) has only trivial solution. Suppose not, then LCP($0,A$) has nontrivial solution, i.e. say, $0 \neq x \in$ SOL$(0, A)$ then $0 \neq x \geq 0,$ $Ax \geq 0$ and $x^TAx = 0.$ Since $A\in E_{0}^{s},$ we can write $A^Tx \leq 0.$ Now $A^Tx \leq 0,\;0\neq x\geq 0 $ has a solution. According to Ville's theorem of alternative, there does not exist $x > 0$ such that $Ax > 0.$ However, $Ax>0,\;x>0$ has a solution since $A \in Q.$ This is a contradiction. Hence LCP$(0,A)$ has only trivial solution. Therefore $A\in Q \cap R_0.$ Now by the Theorem \ref{lopez}, $A \in Q_b.$ Hence SOL$(q, A)$ is nonempty and compact. 
\end{proof}

We illustrate the result with the help of an example.
\begin{examp}
	Consider the matrix
	$ \dsp {
		A = \left[ \begin{array}{rrr}
		0 & 2 & 1\\
		1 & 0 & 1\\
		-2 & -2 & 1
		\end{array} \right]
	} .$
	Now $x^TAx = 3x_1x_2 + x_3^{2} - x_3(x_1 + x_2).$ Now we consider the following four cases.\\
		 \NI Case I: For $x_1 = 0, \ x_2 = k, \ x_3 = 0,$ where $k > 0.$ Here $x \geq 0, \ x^TAx = 0$ holds but in this case $Ax \ngeq 0.$\\
		 \NI Case II: For $x_1 = k, \ x_2 = 0, \ x_3 = 0,$ where $k > 0.$ Here $x \geq 0, \ x^TAx = 0$ holds but in this case $Ax \ngeq 0.$\\
		 \NI Case III: $x_1 = 0, \ x_2 = k, \ x_3 = k,$ where $k > 0.$ Here $x \geq 0, \ x^TAx = 0$ holds but in this case $Ax \ngeq 0.$\\
		 \NI Case IV: $x_1 = k, \ x_2 = 0, \ x_3 = k,$ where $k > 0.$ Here $x \geq 0, \ x^TAx = 0$ holds but in this case $Ax \ngeq 0.$
	
	Hence zero vector is the only vector for which $x \geq 0, \ Ax \geq 0, \ x^TAx = 0$ implies $A^Tx \leq 0$ holds. So $A \in E_{0}^{s}$-matrix. Also it is clear that $A \in \tilde{E_0^s}.$ Here we get that LCP$(0, A)$ has unique solution. Hence $A \in R_0.$ 
\end{examp}

We state some notion of stability of a linear complementarity problem at a solution point.
\begin{defn}
A solution $x^{*}$ is said to be stable if there are neighborhoods $V$ of $x^{*}$ and $U$ of $(q, A)$ such that\\
(i) for all $(\bar{q}, \bar{A}) \in U,$ the set SOL$(\bar{q}, \bar{A}) \cap V \neq \emptyset.$\\
(ii) sup$\{\|y - x^{*}\|: y \in$ SOL$(\bar{q}, \bar{A}) \cap V \neq \emptyset \}$ goes to zero as $(\bar{q}, \bar{A})$ approaches $(q, A).$
\end{defn}
\begin{defn}
A solution $x^{*}$ is said to be strongly stable if there exists a neighborhood $V$ of $x^{*}$ such that the set SOL$(\bar{q}, \bar{A}) \cap V$ is singleton.
\end{defn}
\begin{defn}
	A solution $x^{*}$ is said to be locally unique if there exists a neighborhood $V$ of $x^{*}$ such that SOL$(\bar{q}, \bar{A}) \cap V = \{x^{*}\}.$
\end{defn}

The following result shows that the solution set of LCP($q, A$) is stable when $A \in \tilde{E^s_0}.$
\bt
Suppose $A \in \tilde{E_{0}^{s}}$ with $v(A) > 0,$ if the LCP$(q, A)$ has unique solution $x^{*},$ then LCP$(q, A)$ is stable at $x^{*}.$
\et
\begin{proof}
	As $A \in \tilde{E_{0}^{s}}$ with $v(A) > 0,$ then by the Theorem \ref{R_0thm}, $A \in R_0.$ Again as shown in the Theorem \ref{pangresult}, $A \in$ int$(Q) \cap R_0.$ So by the Theorem \ref{pangtheorem}, if the LCP$(q, A)$ has unique solution $x^{*},$ then LCP$(q, A)$ is stable at $x^{*}.$
\end{proof}

\section{Iterative algorithm to process LCP$(q, A)$}
Aganagic and Cottle \cite{aganagic} proved that Lemke's algorithm processes LCP$(q, A)$ if $A \in P_0 \cap Q_0.$ Todd and Ye \cite{todd} proposed a projective algorithm to solve linear programming problem considering a suitable merit function. Using the same merit function Pang \cite{Pang} proposed an iterative descent type algorithm with a fixed value of the parameter $\kappa$ to process LCP$(q, A)$ where $A$ is a row sufficient matrix. Kojima et al. \cite{kojima1991unified} proposed an interior point method to process $P_0$-matrices using similar type of merit function. Here we propose a modified version of interior point algorithm by using a dynamic $\kappa$ for each iterations in line with Pang \cite{Pang} for finding solution of LCP$(q, A)$ given that $A \in \tilde{E^{s}_{0}}.$ Note that $\tilde{E^{s}_{0}}$ contains $P_0$-matrices as well as non $P_0$-matrices. We prove that the search directions generated by the algorithm are descent and show that the proposed algorithm converges to the solution under some defined conditions. 

\

\NI{\bf Algorithm. }\\
Let $z>0$, $w=q+Az>0,$ and $\psi:R^n_{++}\times R^n_{++}\rightarrow R$ such that $\psi(z,w)=\kappa^k\,\mbox{log}(z^Tw)-\sum_{i=1}^{n}\mbox{ log }(z_iw_i)\geq 0.$ Further suppose $\rho^k =$ $\min_{i}$$\{z_i^kw_i^k\}$ and $\kappa^k >$ $\max$$(n, \frac{z^Tw}{\rho^k})$ for $k$-th iteration. 
\begin{description}
	\item[Step 1:] Let $\beta \in (0,1)$ and $\sigma \in (0,\frac{1}{2})$ following line search step and $z^0$ be a strictly feasible point of LCP$(q, A)$ and  $w^0=q+Az^0>0.$
	\begin{center}
		$\nabla_{z}\psi_{k}= \nabla_{z}\psi(z^k,w^k)$,~~~~~~$\nabla_{w}\psi_{k}= \nabla_{w}\psi(z^k,w^k)$
	\end{center}
	and
	\begin{center}
		$Z^k=diag(z^k)$,~~~~~~$W^k=diag(w^k)$.
	\end{center}
	\item[Step 2:] Now to find the search direction, consider the following problem
	\begin{center}
		minimize~~~~~~$(\nabla_{z}\psi_{k})^{T}d_{z} + (\nabla_{w}\psi_{k})^{T}d_{w}$\\
	\end{center}
	\begin{center}
		subject to~~~~ $d_{w}=Ad_{z}$,~~~~~~$\|(Z^k)^{-1}d_{z}\|^{2}+\|(W^k)^{-1}d_{w}\|^{2}\leq \beta^{2}.$
	\end{center}
	\item[Step 3:] Find the smallest $m_{k} \geq 0$ such that
	\begin{center}
		$\psi(z^k+2^{-m_k}d_{z}^{k},w^k+2^{-m_k}d_{w}^{k})-\psi(z^{k},w^{k})\leq \sigma2^{-m_k}[(\nabla_{z}\psi_{k})^{T}d_{z}^{k} + (\nabla_{w}\psi_{k})^{T}d_{w}^{k}].$
	\end{center}
	\item[Step 4:] Set
	\begin{center}
		$(z^{k+1},w^{k+1})=(z^{k},w^{k})+ 2^{-m_{k}}(d_{z}^{k},d_{w}^{k}).$
	\end{center}
	\item[Step 5:]If $(z^{k+1})^Tw^{k+1} \leq \epsilon,$ where $\epsilon$ is a very small positive quantity, stop else $k = k+1.$
\end{description}

\begin{remk}
	The algorithm is based on the existence of a strictly feasible point. As $A \in \tilde{E^{s}_{0}}$ implies $A \in Q_0$ in view of Theorem \ref{L_2} then existence of a strictly feasible points for such a matrix will eventually lead to the solution of LCP$(q, A).$
\end{remk}

Now we prove the following lemma for $E_0$-matrices.
\bl\label{lemma}
Suppose $A\in E_0,$ $z>0,$ $w=q+Az>0,$ and $\psi:R^n_{++}\times R^n_{++}\rightarrow R$ such that $\psi(z, w)=\kappa\,\mbox{log}(z^Tw)-\sum_{i=1}^{n}\mbox{ log }(z_iw_i).$ Further suppose $\rho^k =$ $\min$$_i$$\{z_i^kw_i^k\}$ and $\kappa^k >$ $\max$$(n, \frac{z^Tw}{\rho^k})$ for each $k$th iteration. Then the search direction $(d^k_z, d^k_w)$ generated by the algorithm is descent direction.

\el
\begin{proof} Let us consider $r^k=\nabla_{z}\psi_{k}+A^{T}\nabla_{w}\psi_{k}$ and first we show that $r^k \neq 0$ for $k$th iteration. Consider the merit function $z>0$, $w=q+Az>0$ and $\psi:R^n_{++}\times R^n_{++}\rightarrow R$ such that $\psi(z,w)=\kappa\,\mbox{log}(z^Tw)-\sum_{i=1}^{n}\mbox{ log }(z_iw_i)\geq 0.$ Note that
	$$ \begin{array}{ll}
	\big(\nabla_z \psi(z,w)\big)_i&=\frac{\kappa}{z^Tw}v_i-\frac{1}{z_iw_i}w_i\\
	&=w_i\big[\frac{\kappa}{z^Tw}-\frac{1}{z_iw_i}\big].
	\end{array}$$
	Similarly we show
	$$ \begin{array}{ll}
	\big(\nabla_w \psi(z, w)\big)_i&=z_i\big[\frac{\kappa}{z^Tw}-\frac{1}{z_iw_i}\big].
	\end{array}$$
	Again for $k$th iteration $\kappa^k >$ $\max$$(n, \frac{z^Tw}{\rho^k})$ where $\rho^k =$ $\min$$_i$$\{z_i^kw_i^k\}.$ This implies
	\begin{center}
		$z_i(\frac{\kappa^k}{z^Tw}-\frac{1}{z_iw_i}) > 0.$
	\end{center}
	Therefore $\big(\nabla_w \psi(z, w)\big)_i > 0$ $\forall i.$ In a similar way we can show that $\big(\nabla_z \psi(z, w)\big)_i >0$ $\forall i.$ Now $A \in E_0.$ So $A^T \in E_0.$ By the definition of semimonotonicity for $\big(\nabla_w \psi(z, w)\big) > 0$  $\exists$ a $j$ such that $(A^T\nabla_w\psi (z, w))_j \geq 0.$ Therefore $(\nabla_z\psi (z, w))_j+(A^T\nabla_v\psi (z, w))_j \neq 0$ for atleast one $j.$ Hence $\nabla_z\psi (z, w)+A^T\nabla_v\psi (z, w)\neq 0.$ We have $d_{z}^{k}=-\frac{(A^{k})^{-1}r^{k}}{\tau_{k}}$, $d_{w}^{k}=Ad_{z}^{k}$ from the algorithm. Again $A^{k}= (Z^{k})^{-2}+A^{T}(W^{k})^{-2}A$ is positive definite as
$$ \begin{array}{ll}
x^TA^T(W)^{-2}Ax &=(Ax)^T(W)^{-2}Ax\\
&=(y)^T(W)^{-2}y
\end{array} $$
and $(y)^T(W)^{-2}y\geq 0,\;\forall\;y\in R^{n},$ $A^T(W)^{-2}A$ is positive semidefinite. So $\tau_{k}= \dfrac{\sqrt{(r^{k})^{T}(A^{k})^{-1}r^{k}}}{\beta}$ is positive. Now we show that $(\nabla_z\psi_k)^Td_z^k + (\nabla_w\psi_k)^Td_w^k<0.$ We derive
$$ \begin{array}{ll}
(\nabla_z\psi_k)^Td_z^k + (\nabla_w\psi_k)^Td_w^k&=\big[\nabla_z\psi_k + A^T\nabla_w\psi_k\big]^Td^k_w\\
&=-\frac{1}{\tau_k}(\sqrt{(r^k)^T(A^k)^{-1}r^k})^2\\
&=-\tau_k\beta^2<0.
\end{array} $$
\NI We consider $\psi(z^k+2^{-m_k}d_{z}^{k},w^k+2^{-m_k}d_{w}^{k})-\psi(z^{k}, w^{k})\leq \sigma2^{-m_k}[(\nabla_{z}\psi_{k})^{T}d_{z}^{k} + (\nabla_{w}\psi_{k})^{T}d_{w}^{k}].$
Since $0<\beta, \sigma<1,$ we say $\psi(z^k+2^{-m_k}d_{z}^{k}, w^k+2^{-m_k}d_{w}^{k})-\psi(z^{k}, w^{k})< 0.$
Hence $(d^k_z, d^k_w)$ is descent direction in this algorithm.
\end{proof}

\begin{remk}
Note that the Lemma \ref{lemma} is true for $\tilde{E^{s}_{0}}$-matrices as $\tilde{E^{s}_{0}} \subseteq E_0.$
\end{remk}

We prove the following theorem to show that the proposed algorithm converges to the solution under some defined condition.
\vsp
\bt \label{thr6}
If $A\in \tilde{E^{s}_{0}}$ and LCP$(q, A)$ has a strictly feasible solution, then every accumulation point of $\{z^k\}$ is the solution of LCP$(q, A)$ i.e. algorithm converges to the solution.
\et
\begin{proof} If there exists strictly feasible points then LCP$(q, A)$ has a solution where $A \in \tilde{E^{s}_{0}}.$ Let us consider the subsequences $\{z^k : k \in \omega\}.$ Suppose $\tilde{z}$ is the limit of the subsequence and $\tilde{w} = q + A\tilde{z}.$ Again we know $\psi(\tilde{z}, \tilde{w}) < \infty.$ So either $\tilde{z}^{T}\tilde{w} = 0$ or $(\tilde{z}, \tilde{w}) > 0.$ If the first case happen, then $(\tilde{z}, \tilde{w})$ is a solution. So let us consider that $(\tilde{z}, \tilde{w}) > 0.$ Also suppose $\tilde{r}$ and $\tilde{A}$ are the limits of the subsequences $\{r^{k} : k \in \omega\}$ and $\{A^{k} : k \in \omega\}$ respectively. Consider $\tau^{k}$ converges to $\tilde{\tau} = \dfrac{\sqrt{\tilde{r}^{T}\tilde{A}^{-1}\tilde{r}}}{\beta}> 0,$ where $\tilde{A}$ remains positive definite. $(\tilde{d_{z}}, \tilde{d_{w}})$ be the limits of the sequence of direction $(d_{z}^{k}, d_{w}^{k}).$ So from the algorithm we get
\begin{center}
	$\tilde{d_{z}}=-\frac{\tilde{A}^{-1}\tilde{r}}{\tilde{\tau}}$,~~~~~~${\tilde{d_{w}}=A\tilde{d_{z}}}.$
\end{center}
Now as $\{\psi(z^{k+1}, w^{k+1}) - \psi(z^{k}, w^{k})\}$ converges to zero and since $\lim m_{k} = \infty$ as $k \rightarrow \infty,$ $\{(z^{k+1}, w^{k+1}) : k \in \omega\}$ and $\{(z^{k} + 2^{-(m_{k} - 1)}d_{z}^{k}, w^{k} + 2^{-(m_{k} - 1)}d_{w}^{k}) : k \in \omega\}$ converges to $(\tilde{z}, \tilde{w})$. As $m_{k}$ is the smallest non-negative integers, we have,
\begin{center}
	$\frac{\psi(z^{k} + 2^{-(m_{k} - 1)}d_{z}^{k}, w^{k} + 2^{-(m_{k} - 1)}d_{w}^{k}) - \psi(z^{k}, w^{k})}{2^{-(m_{k} - 1)}} > -\sigma \beta^{2}\tau_{k}.$
\end{center}
Again on the other hand from the algorithm,
\begin{center}
	$\frac{\psi(z^{k+1}, w^{k+1}) - \psi(z^{k}, w^{k})} {2^{-m_{k}}} \leq -\sigma \beta^{2}\tau_{k}.$
\end{center}
Now taking limit $k \rightarrow \infty,$ we write,
\begin{center}
	$\nabla_z\psi(\tilde{z}, \tilde{w})^T\tilde{d_z} + \nabla_w\psi(\tilde{z}, \tilde{w})^T\tilde{d_w} = -\sigma\tilde{\tau}\beta^{2}.$
\end{center}
Again from Lemma \ref{lemma} we know,
\begin{center}
	$(\nabla_z\psi_k)^Td_z^k + (\nabla_w\psi_k)^Td_w^k = -\tau_k\beta^2.$
\end{center}
Hence by taking limit $k \rightarrow \infty,$ we get
\begin{center}
	$\nabla_z\psi(\tilde{z}, \tilde{w})^T\tilde{d_z} + \nabla_w\psi(\tilde{z}, \tilde{w})^T\tilde{d_w} = -\tilde{\tau}\beta^{2}.$
\end{center}
Therefore we arrive at a contradiction. So our proposed algorithm converges to the solution.
\end{proof} 

\section{Numerical illustration}
A numerical example is considered to demonstrate the effectiveness and efficiency of the proposed algorithm. 
\begin{examp}

We consider the following example of LCP$(q, A),$
where
\begin{center}
	$A$ =  $\left(\begin{array}{rrr}
	0 & 1 & 1  \\
	2 & 0 & 2 \\
	-2 & -5 & 0
	\end{array}\right)$ and
	\ $q$ =  $\left(\begin{array}{r}
	-4\\
	-7\\
	 10
	\end{array}\right).$
\end{center}
It is easy to show that $A \in \tilde{E_{0}^{s}}.$ We apply proposed algorithm to find solution of the given problem. According to Theorem \ref{thr6} algorithm converges to solution with $z^0, w^0 > 0.$ To start with we initialize $\beta = 0.5,$ $\gamma = 0.5,$ $\sigma = 0.2,$ and $\epsilon = 0.00001.$ We set $z^0$ =   $\left(\begin{array}{rrr}
 1\\
 1 \\
 5
\end{array} \right)$
and obtain $w^0$ =   $\left(\begin{array}{r}
2 \\
5 \\
3
\end{array} \right).$
	\begin{table}[h!]
		\centering
		\resizebox{\columnwidth}{!}{%
		\begin{tabular}{|c|c|c|c|c|c|}
			\hline
			Iteration (k) & $z^k$ & $w^k$ &  $d_z^k$ & $d_w^k$ & $\psi(z^k, w^k)$\\
			\hline

			1 & $\left( \begin{array}{rrr}  1.05 \\ 1.09 \\ 4.76 \end{array} \right)$ & $\left( \begin{array}{rrr} 1.85 \\ 4.62 \\ 2.42 \end{array} \right)$& $\left( \begin{array}{rrr} 0.106 \\ 0.189 \\ -0.487 \end{array} \right)$ & $\left( \begin{array}{rrr}  -0.298 \\ -0.761 \\ -1.155 \end{array} \right)$ & 29.3308\\
			\hline
			
			\hline
			2 & $\left( \begin{array}{rrr}  1.1 \\ 1.17 \\ 4.53 \end{array} \right)$ & $\left( \begin{array}{rrr} 1.7 \\ 4.25 \\ 1.94 \end{array} \right)$& $\left( \begin{array}{rrr} 0.0853 \\ 0.1607 \\ -0.4551 \end{array} \right)$ & $\left( \begin{array}{rrr} -0.294 \\ -0.74 \\ -0.974 \end{array} \right)$ & 23.2919\\
			
			\hline
			
			

			 \hline
			 
			 \vdots  & \vdots & \vdots & \vdots & \vdots & \vdots\\
			 \hline


			 50 & $\left( \begin{array}{rrr}  1.07 \\ 1.57 \\ 2.43 \end{array} \right)$ & $\left( \begin{array}{rrr} 0.00608 \\ 0.00389 \\ 0.00281 \end{array} \right)$& $\left( \begin{array}{rrr} 0.00047 \\ -0.00017 \\ -0.00154 \end{array} \right)$ & $\left( \begin{array}{rrr} -0.00171 \\ -0.00215 \\ -0.00009 \end{array} \right)$ & 2.4617\\
			 \hline
			 
			 \vdots  & \vdots & \vdots & \vdots & \vdots & \vdots\\
			 \hline
			 
			 96 & $\left( \begin{array}{rrr}  1.07 \\ 1.57 \\ 2.43  \end{array} \right)$ & $\left( \begin{array}{rrr} 0.00001 \\ 0.000000 \\ 0.00000 \end{array} \right)$& $\left( \begin{array}{rrr} -0.000001 \\ -0.00000 \\ -0.000003 \end{array} \right)$ & $\left( \begin{array}{rrr} -0.00000 \\ -0.00000 \\ -0.00000 \end{array} \right)$ & 1.1684\\
			 \hline


			 97 & $\left( \begin{array}{rrr}  1.07 \\ 1.57 \\ 2.43 \end{array} \right)$ & $\left( \begin{array}{rrr} 0.00001 \\ 0.000009 \\ 0.000005 \end{array} \right)$& $\left( \begin{array}{rrr} 0.000002 \\ 0.000000 \\ -0.000000 \end{array} \right)$ & $\left( \begin{array}{rrr} -0.000000 \\ -0.000000 \\ -0.000000 \end{array} \right)$ & 1.1684\\
			 \hline

			 

			 \vdots  & \vdots & \vdots & \vdots & \vdots & \vdots\\
			 \hline

			 100 & $\left( \begin{array}{rrr}  1.07 \\ 1.57 \\ 2.43 \end{array} \right)$ & $\left( \begin{array}{rrr} 0.00000 \\ 0.00000 \\ 0.00000 \end{array} \right)$& $\left( \begin{array}{rrr} 0.000000 \\ -0.000000 \\ -0.000000 \end{array} \right)$ & $\left( \begin{array}{rrr} -0.000000 \\ -0.000000 \\ 0.00000 \end{array} \right)$ & 1.0565\\
			 \hline

		\end{tabular}%
		}
		\caption{Summary of computation for the proposed algorithm} \label{t1}
	\end{table}

\pagebreak	
Table \ref{t1} summarizes the computations for the first 2 iterations, 50th iteration and 96th, 97th iteration and 100th iteration. At the 100th iteration, sequence $\{z^k\}$ and $\{w^k\}$ produced by the proposed algorithm converges to the solution of the given LCP$(q, A)$ i.e. $z^{*}$ =  $\left(\begin{array}{rrr}
1.0714 \\
1.5714 \\
2.4285
\end{array}\right)$ and $w^{*}$ =    $\left(\begin{array}{rrr}
0 \\
0 \\
0
\end{array}\right).$	
\end{examp}
\section{Concluding remark}
 In this article, we show that LCP$(q,A)$ is processable by Lemke's algorithm and the solution set of LCP$(q,A)$ is bounded if $A\in \tilde{E_0^s} \cap P_0,$ a subclass of $E_0^s \cap P_0.$ It can be shown that non-negative matrices with zero diagonal with atleast one $a_{ij} > 0$ with $i \neq j$ is not a $\tilde{E_{0}^{s}}$-matrix. Whether a matrix class belongs to $P_{0}\cap Q_{0}$ or not is difficult to verify. We find some conditions under which $\tilde{E_{0}^{s}}$-matrix will belong $P_{0}\cap Q_{0}$ which will motivate further study and applications in matrix theory. Finally we propose an iterative and descent type interior point method to compute solution of LCP$(q, A).$


\bibliographystyle{plain}
\bibliography{bibfile}

\end{document}